\documentclass[xcolor=x11names,reqno,12pt]{amsart}
 \usepackage{amsmath}
 \usepackage{amsthm}
 \usepackage{amssymb}
 \usepackage{diagbox}
 \usepackage{latexsym,longtable}
 \usepackage{graphicx}
 \usepackage{multicol}
 \usepackage{mathrsfs}
 \usepackage{young}
 \usepackage[vcentermath,enableskew,stdtext]{youngtab}
 \usepackage[left=0.9in,right=0.9in,top=1.08in,bottom=1.1in]{geometry}
 \usepackage[all]{xy}
    \SelectTips{cm}{10}     
    \everyxy={<2.5em,0em>:} 
 \usepackage{fancyhdr}      
    \linespread{1.02}
 \usepackage{multicol}
 \usepackage{multirow}
 \usepackage{enumitem}
 \usepackage{bm}
 \usepackage{etex}
 \usepackage{tikz}
 \usepackage{float}
 \usepackage{array}
 \usepackage{colortbl}
 \usepackage{mathtools}
 \restylefloat{figure}
\numberwithin{equation}{section}
\usepackage{url}
 \usepackage{ytableau}

\usepackage{hyperref}
\usepackage{centernot}

\setlength{\parskip}{.8mm}
\setlength{\baselineskip}{5pt}
\makeatletter
\newcommand*{\centerfloat}{%
  \parindent \z@
  \leftskip \z@ \@plus 1fil \@minus \textwidth
  \rightskip\leftskip
  \parfillskip \z@skip}
\makeatother

\theoremstyle{plain}
\newtheorem{theorem}{Theorem}[section]
\newtheorem{lemma}[theorem]{Lemma}
\newtheorem{corollary}[theorem]{Corollary}
\newtheorem{proposition}[theorem]{Proposition}

\newtheorem*{conjecture*}{Conjecture}

\theoremstyle{definition}

\theoremstyle{definition}

\newtheorem*{example*}{Example}







\newcommand{\ignore}[1]{}












\newcommand{\h}{\ensuremath{\mathfrak{h}}}





\newcommand{\be}{\begin{equation}}
\newcommand{\ee}{\end{equation}}

\newcommand{\crc}[1]{#1\star}













\newlength{\mycellsize}
\mycellsize=2.5ex
\newcommand\mytbl[1]{
\vcenter{
\let\\=\cr
\baselineskip=-16000pt \lineskiplimit=16000pt \lineskip=0pt
\halign{&\mytblcell{##}\cr#1\crcr}}}


\newcommand{\mytblcell}[1]{{%
\def \arg{#1}\def \void{}%
\ifx \void \arg
\vbox to \mycellsize{\vfil \hrule width \mycellsize height 0pt}%
\else \unitlength=\mycellsize
\begin{picture}(1,1)
\put(0,0){\makebox(1,1){$#1\vphantom{\crc{#1}}$}}
\put(0,0){\line(1,0){1}}
\put(0,1){\line(1,0){1}}
\put(0,0){\line(0,1){1}}
\put(1,0){\line(0,1){1}}
\end{picture}%
\fi}}



\newlength{\cellsize}
\cellsize=2.5ex
\newcommand\mytableau[1]{
\vcenter{
\let\\=\cr
\baselineskip=-16000pt \lineskiplimit=16000pt \lineskip=0pt
\halign{&\mytableaucell{##}\cr#1\crcr}}}


\newcommand{\mytableaucell}[1]{{%
\def \arg{#1}\def \void{}%
\ifx \void \arg
\vbox to \cellsize{\vfil \hrule width \cellsize height 0pt}%
\else \unitlength=\cellsize
\begin{picture}(1,1)
\put(0,0){\makebox(1,1){$#1\vphantom{\crc{#1}}$}}
\put(0,0){\line(1,0){1}}
\put(0,1){\line(1,0){1}}
\put(0,0){\line(0,1){1}}
\put(1,0){\line(0,1){1}}
\end{picture}%
\fi}}

\newcommand\boldtableau[1]{
\vcenter{
\let\\=\cr
\baselineskip=-16000pt \lineskiplimit=16000pt \lineskip=0pt
\halign{&\boldtableaucell{##}\cr#1\crcr}}}


\newcommand{\boldtableaucell}[1]{{%
\def \arg{#1}\def \void{}%
\ifx \void \arg
\vbox to \cellsize{\vfil \hrule width \cellsize height 0pt}%
\else \unitlength=\cellsize
\begin{picture}(1,1)
\put(0,0){\makebox(1,1){$\mathbf{#1\vphantom{\crc{#1}}}$}}
\put(0,0){\line(1,0){1}}
\put(0,1){\line(1,0){1}}
\put(0,0){\line(0,1){1}}
\put(1,0){\line(0,1){1}}
\end{picture}%
\fi}}

\setlength{\cellsize}{1.9ex}

\allowdisplaybreaks

\title{Distribution Properties for $t$-hooks in Partitions}

\keywords{}


\begin{document}

\author{William Craig}
\address{Department of Math, University of Virginia, Charlottesville, VA 22904}
\email{wlc3vf@virginia.edu}

\author{Anna Pun}
\address{Department of Math, University of Virginia, Charlottesville, VA 22904}
\email{annapunying@gmail.com}

\begin{abstract}
Partitions, the partition function $p(n)$, and the hook lengths of their Ferrers-Young diagrams are important objects in combinatorics, number theory and representation theory. For positive integers $n$ and $t$, we study $p_t^e(n)$ (resp. $p_t^o(n)$), the number of partitions of $n$ with an even (resp. odd) number of $t$-hooks. We study the limiting behavior of the ratio $p_t^e(n)/p(n)$, which also gives $p_t^o(n)/p(n)$ since $p_t^e(n) + p_t^o(n) = p(n)$. For even $t$, we show that
$$\lim\limits_{n \to \infty} \dfrac{p_t^e(n)}{p(n)} = \dfrac{1}{2},$$ and for odd $t$ we establish the non-uniform distribution $$\lim\limits_{n \to \infty} \dfrac{p^e_t(n)}{p(n)} = \begin{cases} \dfrac{1}{2} + \dfrac{1}{2^{(t+1)/2}} & \text{if } 2 \mid n, \\ \\ \dfrac{1}{2} - \dfrac{1}{2^{(t+1)/2}} & \text{otherwise.} \end{cases}$$ Using the Rademacher circle method, we find an exact formula for $p_t^e(n)$ and $p_t^o(n)$, and this exact formula yields these distribution properties for large $n$. We also show that for sufficiently large $n$, the sign of $p_t^e(n) - p_t^o(n)$ is periodic.
\end{abstract}

\maketitle

\section{Introduction and Statement of Results}

For a positive integer $n$, a \textit{partition} $\lambda = (\lambda_1 \geq \lambda_2 \geq \dots \geq \lambda_k)$ of $n$ is defined as a weakly decreasing sequence of positive integers whose sum is $n$. For every $n$, the partition function $p(n)$ is defined as the number of partitions of $n$. The study of partitions and the partition function has been of historical importance both in combinatorics and number theory. One of the most important tools used in the combinatorial study of partitions is the \textit{Ferrers-Young diagrams}. This is a geometric representation of a partition $(\lambda_1 \geq \dots \geq \lambda_k)$ as a grid of $k$ rows of left-aligned square cells, with row $i$ containing $\lambda_i$ cells. For instance, the partition $(5, 4, 1)$ of 10 has the Ferrers-Young diagram

\begin{center}
$\begin{Young} & & & & \cr & & & \cr \cr \end{Young}$.
\end{center}

\noindent One important set of quantities associated with the Ferrers-Young diagram of a partition $\lambda$ are the hook numbers $H_\lambda(i,j)$, the number of cells $(a,b)$ in the diagram of $\lambda$ such that $i \leq a$ and $j \leq b$. It is common to represent all hook numbers of $\lambda$ by placing $H_\lambda(i,j)$ in the interior of each cell of the diagram. For instance, doing so for the partition $\lambda = (5, 4, 1)$ yields

\begin{center}
$\begin{Young} 7 & 5 & 4 & 3 & 1 \cr 5 & 3 & 2 & 1 \cr 1 \cr \end{Young}$.
\end{center}


For a partition $\lambda$ of $n$, a Young tableau of shape $\lambda$ is a labelling of each cell in the Ferrers-Young diagram of shape $\lambda$ by a distinct number from $1$ through $n$. A standard Young tableau is a Young tableau such that entries are increasing in rows (from left to right) and columns (from top to bottom). These tableau are central objects in the representation theory of $S_n$ \cite{Young1902,GA1981,Robinson61}. For example, the irreducible representations of $S_n$ are in one-to-one correspondence with the partitions of $n$ and given a partition $\lambda$, and the degree $f^\lambda$ of the irreducible representation of $S_n$ corresponding to $\lambda$ is the number of the standard Young tableaux of shape $\lambda$. The hook numbers play an important role in calculating $f^\lambda$ via the Frame-Thrall-Robinson hook length formula \cite{FRT54}:
$$f^\lambda = \frac{n!}{\prod\limits_{(i,j)} H_\lambda(i,j)}.$$
         
Hook numbers and related quantities arise in other ways in combinatorics and representation theory. The multiset of hook numbers of $\lambda$, denoted $\mathcal{H}(\lambda)$, appears in the famous Nekrosov-Okounkov formula (formula (6.12) in \cite{NO2006}) $$\sum_{\lambda \in \mathcal P} x^{|\lambda|} \prod_{h \in \mathcal{H}(\lambda)} (1 - z/h^2) = \prod_{k \geq 1} (1 - x^k)^{z-1},$$ which represents an extraordinary generalization of identities of Euler and Jacobi. Restricting to hook numbers that are multiples of a fixed $t$ is natural and leads to the study of $t$-core partitions, which are defined as partitions for which no hook number is a multiple of $t$. The numbers of $t$-core partitions have been the subject of much research, including the famous result of Granville and Ono proving that $t$-core partitions exist for all positive integers when $t \geq 4$ \cite{GO1996}. The study of $t$-cores is fundamental to the modular representation theory of symmetric groups. \\

For the positive integer $t \geq 2$ and $\lambda$ a partition of any positive integer, we wish to study the multiset $\mathcal{H}_t(\lambda)$ of hook numbers which are multiples of $t$. In particular, we are interested in the parity of $\# \mathcal{H}_t(\lambda)$. To study this quantity, we define the partition functions $p^e_t(n)$ and $p^o_t(n)$ to count the number of partitions $\lambda$ of $n$ for which $\# \mathcal{H}_t(\lambda)$ is even and odd, respectively. That is, we define
\begin{equation} \label{Even/Odd Definition}
\begin{split}
p^e_t(n) := \# \{ \lambda \vdash n : \# \mathcal{H}_t(\lambda) \equiv 0 \pmod{2} \}, \\ p^o_t(n) := \# \{ \lambda \vdash n : \# \mathcal{H}_t(\lambda) \equiv 1 \pmod{2} \}.
\end{split}
\end{equation}
Of particular interest here is the distribution of the parity of $\# \mathcal{H}_t(\lambda)$. To study this distribution, we define the functions $\delta^e_t(n) := \dfrac{p^e_t(n)}{p(n)}$ and $\delta^o_t(n) := \dfrac{p^o_t(n)}{p(n)}$. There have been recent developments in studying distribution properties that make use of the properties of $t$-hooks. For example, Peluse has proved that the density of odd values in the character table of $S_n$ goes to zero as $n \to \infty$ \cite{Peluse}. We perform a similar distribution analysis for the parity of the number of $t$-hooks.

By definition, we have $p^e_t(n) + p^o_t(n) = p(n)$, and so $\delta_t^e(n) + \delta_t^o(n) = 1$. Naively, one would expect an even distribution of parities as $n$ becomes large, that is, we would expect that $\delta^e_t(n) \to 1/2$ and $\delta_t^o(n) \to 1/2$. Numerically, this initial speculation receives support for small values of $t$ like $t = 2, 4, 6$, and $8$, as the following table suggests.

\begin{table}[h]
\begin{tabular}{|c|c|c|c|c|c|} \hline
$t$ & $\delta_t^e(100)$ & $\delta_t^e(1000)$ & $\delta_t^e(10000)$ & $\cdots$ & $\infty$ \\ \hline
2 & 0.56611246  &0.50027931  & 0.50000000  &$\cdots$ & $\frac{1}{2}$ \\ \hline
4 & 0.47067843  &0.50002869  & 0.50000000  &$\cdots$ & $\frac{1}{2}$ \\ \hline
6 & 0.52465920  &0.50007471  & 0.50000000  &$\cdots$ & $\frac{1}{2}$ \\ \hline
8 & 0.49484348  &0.49999135  & 0.50000000  &$\cdots$ & $\frac{1}{2}$ \\ \hline
\end{tabular}
\caption{Data for $\delta_t^e(n)$, even $t$}
\end{table}
However, numerical evidence below for the cases $t = 3 ,5 ,7$ and $9$ appears to refute this naive guess. In fact, these tables suggest the existence of multiple limiting values. 

\begin{table}[h]
\begin{tabular}{|c|c|c|c|c|c|c|} \hline
$t$ & $\delta^e_t(100)$ & $\delta^e_t(500)$ & $\delta^e_t(1000)$ & $\delta_t^e(1500)$  &$\cdots$ & $\infty$ \\ \hline
3 &0.7137967695 & 0.7502983017 &0.7499480195 & 0.7500039425 & $\cdots$ & $\frac{3}{4}$ \\ \hline
5 &0.6374948698 & 0.6252149479 &0.6250102246 & 0.6250009877 &  $\cdots$ & $\frac{5}{8}$ \\ \hline
7 &0.5468769228 & 0.5624965413 &0.5625165550 & 0.5624989487 &  $\cdots$ & $\frac{9}{16}$ \\ \hline
9 &0.5375271584 & 0.5313027269 &0.5312496766 & 0.5312499631 &  $\cdots$ & $\frac{17}{32}$ \\ \hline
\end{tabular}
\caption{Data for $\delta_t^e(n)$, $t$ odd and $n$ even.}
\end{table}

\begin{table}[h]
	\begin{tabular}{|c|c|c|c|c|c|c|} \hline
		$t$ & $\delta^e_t(101)$ & $\delta^e_t(501)$ & $\delta^e_t(1001)$ & $\delta_t^e(1501)$ &  $\cdots$ & $\infty$ \\ \hline
		3 &0.2376157284 & 0.2494431573 & 0.2499820335 & 0.2500060167 &  $\cdots$ & $\frac{1}{4}$ \\ \hline
		5 &0.3755477486 & 0.3750000806 & 0.3750000001 & 0.3750000000 &  $\cdots$ & $\frac{3}{8}$ \\ \hline
		7 &0.4396942088 & 0.4374987794 & 0.4374959329 & 0.4375000006 &  $\cdots$ & $\frac{7}{16}$ \\ \hline
		9 &0.4787668076 & 0.4688094755 & 0.4687535414 & 0.4687510507 &  $\cdots$ & $\frac{15}{32}$ \\ \hline
	\end{tabular}
	\caption{Data for $\delta_t^e(n)$, $t$ odd and $n$ odd.}
\end{table}

In this paper, we prove two main theorems which explain this data, and offer exact values on the limiting values in these distributions.

\begin{theorem} \label{Even and Odd T Behavior}
Assuming the notation above, the following are true.

1) If $t>1$ is an even integer, then $$\lim_{n \to \infty} \delta_t^e(n) = \lim_{n \to \infty} \delta_t^o(n) = \dfrac{1}{2}.$$

2) If $t>1$ is an odd integer, then we have $$\lim_{n \to \infty} \delta_t^e(n) = \begin{cases} \dfrac{1}{2} + \dfrac{1}{2^{(t+1)/2}} & \text{if } 2 \mid n, \\ \\ \dfrac{1}{2} - \dfrac{1}{2^{(t+1)/2}} & \text{if } 2 \nmid n, \end{cases} \hspace{0.4in} \text{and} \hspace{0.4in} \lim_{n \to \infty} \delta_t^o(n) = \begin{cases}  \dfrac{1}{2} - \dfrac{1}{2^{(t+1)/2}} & \text{if } 2 \mid n, \\ \\ \dfrac{1}{2} + \dfrac{1}{2^{(t+1)/2}} & \text{if } 2 \nmid n. \end{cases}$$
\end{theorem}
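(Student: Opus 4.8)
The plan is first to realize $p^e_t(n)-p^o_t(n)$ as the coefficients of an eta-quotient. Encoding a partition $\lambda$ by its $t$-core/$t$-quotient $(\kappa;\lambda^{(0)},\dots,\lambda^{(t-1)})$ and using the classical fact that $\#\mathcal{H}_t(\lambda)=|\lambda^{(0)}|+\dots+|\lambda^{(t-1)}|$ (the number of $t$-ribbons removed to reach the $t$-core), one gets
\[
\sum_{n\ge0}\big(p^e_t(n)-p^o_t(n)\big)q^n=\sum_{\lambda}(-1)^{\#\mathcal{H}_t(\lambda)}q^{|\lambda|}=\Big(\sum_{t\text{-cores }\kappa}q^{|\kappa|}\Big)\Big(\sum_{\mu}(-1)^{|\mu|}q^{t|\mu|}\Big)^{t},
\]
the inner sum over all partitions $\mu$. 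Since $\sum_{t\text{-cores }\kappa}q^{|\kappa|}=(q^t;q^t)_\infty^{t}/(q;q)_\infty$ and, by Euler's identities, $\sum_\mu(-1)^{|\mu|}x^{|\mu|}=\prod_{k\ge1}(1-(-x)^k)^{-1}=(x;x)_\infty(x^4;x^4)_\infty/(x^2;x^2)_\infty^{3}$, taking $x=q^t$ shows $A_t(q):=\sum_n(p^e_t(n)-p^o_t(n))q^n$ equals
\[
A_t(q)=\frac{(q^t;q^t)_\infty^{2t}\,(q^{4t};q^{4t})_\infty^{t}}{(q;q)_\infty\,(q^{2t};q^{2t})_\infty^{3t}}
\]
(at $t=1$ this is $\sum_n(-1)^np(n)q^n$, as it must be). Since $p^e_t+p^o_t=p$, one has $\delta^e_t(n)=\tfrac12\big(1+[q^n]A_t(q)/p(n)\big)$, so both parts of the theorem reduce to comparing $[q^n]A_t(q)$ with $p(n)$.

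\textbf{Step 2: the circle method.} Next I would run the Rademacher circle method on $A_t$. Writing $q=e^{2\pi i\tau}$, the function $q^{-1/24}A_t(q)=\eta(t\tau)^{2t}\eta(4t\tau)^{t}/\big(\eta(\tau)\eta(2t\tau)^{3t}\big)$ is a weakly holomorphic modular form of weight $-\tfrac12$ on $\Gamma_0(4t)$ with explicit multiplier system, so the method yields an absolutely convergent exact formula for $p^e_t(n)-p^o_t(n)$: a sum over $c\ge1$ of Kloosterman-type exponential sums times $I$-Bessel functions, with a family of terms for each cusp $\mathfrak a$ of $\Gamma_0(4t)$ at which $A_t$ has a pole. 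The term attached to $\mathfrak a$ is controlled by the local growth of $A_t$: writing $q=\zeta_{\mathfrak a}e^{-z}$ ($\zeta_{\mathfrak a}$ the corresponding root of unity, $z\to0^+$) and $A_t(\zeta_{\mathfrak a}e^{-z})\sim(\text{power of }z)\,e^{B_{\mathfrak a}/z}$, that term has exponential order $e^{2\sqrt{B_{\mathfrak a}n}}$. For comparison $p(n)=[q^n](q;q)_\infty^{-1}$ is dominated by the cusp $q=1$, where $(q;q)_\infty^{-1}\sim\sqrt{z/(2\pi)}\,e^{\pi^2/(6z)}$, giving $p(n)\sim\tfrac1{4n\sqrt3}e^{\pi\sqrt{2n/3}}$ (note $2\sqrt{(\pi^2/6)n}=\pi\sqrt{2n/3}$).

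\textbf{Step 3: cusp analysis, and the even case.} The key computation is to pin down $B_{\mathfrak a}$ at every cusp of $\Gamma_0(4t)$. Using $(e^{-w};e^{-w})_\infty\sim\sqrt{2\pi/w}\,e^{-\pi^2/(6w)}$, $(-e^{-w};-e^{-w})_\infty\sim\sqrt{\pi/w}\,e^{-\pi^2/(24w)}$, and the analogous asymptotics at roots of unity of higher order, I expect to find: $B_{\mathfrak a}=\pi^2/24$ at $q=1$ for all $t$; $B_{\mathfrak a}=\pi^2/6$ at $q=-1$ when $t$ is odd but $B_{\mathfrak a}=-\pi^2/12$ at $q=-1$ when $t$ is even; and $B_{\mathfrak a}<\pi^2/6$ at every remaining cusp, for every $t$. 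For even $t$ this gives $\max_{\mathfrak a}B_{\mathfrak a}<\pi^2/6$, so $[q^n]A_t(q)$ is exponentially smaller than $p(n)$ and $\delta^e_t(n),\delta^o_t(n)\to\tfrac12$: that is part (1).

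\textbf{Step 4: the odd case, the sign, and the main difficulty.} For odd $t$ the unique cusp with $B_{\mathfrak a}=\pi^2/6$ is $q=-1$, and I would compute its principal part explicitly: with $q=-e^{-z}$,
\[
A_t(-e^{-z})\sim\frac{z^{1/2}}{2^{t/2}\sqrt\pi}\,e^{\pi^2/(6z)}=\frac{1}{2^{(t-1)/2}}\cdot\frac{1}{(e^{-z};e^{-z})_\infty}\,\big(1+o(1)\big),
\]
so near $q=-1$ the singularity of $A_t$ is exactly $2^{-(t-1)/2}$ times that of $(q;q)_\infty^{-1}$ near $q=1$ (under $q\mapsto-q$). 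Feeding this into the two Rademacher expansions, the dominant term of $p^e_t(n)-p^o_t(n)$ equals $(-1)^n2^{-(t-1)/2}$ times the dominant term of $p(n)$ — the sign pinned down by the $t=1$ identity $A_1(q)=\sum_n(-1)^np(n)q^n$ — while all remaining terms are $o(p(n))$; thus
\[
p^e_t(n)-p^o_t(n)=(-1)^n\,2^{-(t-1)/2}\,p(n)\,(1+o(1)),
\]
which yields part (2) and shows $\sgn\big(p^e_t(n)-p^o_t(n)\big)=(-1)^n$ for large $n$. For even $t$ the sign is eventually governed by the finitely many cusps attaining $\max_{\mathfrak a}B_{\mathfrak a}$, hence eventually periodic, which is the remaining assertion of the abstract. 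The heart of the argument, and where I expect the main difficulty, is Step 3: controlling the transformation of the eta-quotient $A_t$ at all cusps of $\Gamma_0(4t)$ uniformly in $t$ — especially verifying that for odd $t$ no cusp other than $q=-1$ attains $B_{\mathfrak a}=\pi^2/6$ — which calls for a case analysis according to the $2$-adic and odd parts of $t$ and of the cusp denominator; and then converting the dominant principal part into the leading asymptotic term via the standard (if laborious) Rademacher estimates on the non-dominant Farey arcs and the Bessel tails.
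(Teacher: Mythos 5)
Your proposal is correct and follows essentially the same route as the paper: the same eta-quotient generating function for $p^e_t(n)-p^o_t(n)$ (which you rederive combinatorially via the $t$-core/$t$-quotient bijection rather than citing Han), followed by the Rademacher circle method, isolation of the dominant singularity — at $q=-1$ with principal part $2^{-(t-1)/2}$ times that of $1/(q;q)_\infty$ when $t$ is odd, and strictly subdominant to $\pi^2/6$ at every cusp when $t$ is even — and comparison with $p(n)$. Your claimed cusp constants and the resulting asymptotic $p^e_t(n)-p^o_t(n)\sim(-1)^n2^{-(t-1)/2}p(n)$ for odd $t$ agree exactly with the paper's Corollary on the dominating term of $A_t(n)$ and its Proposition on $A_t(n)/p(n)$.
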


We also study the sign pattern of $p^e_t(n) - p^o_t(n)$, for $n \rightarrow \infty$, which determines when $p_t^e(n) > p_t^o(n)$ and $p_t^o(n) > p_t^e(n)$.

\begin{theorem}\label{distribution property}
	For $t>1$ a fixed positive integer, write $t = 2^s\ell$ for integers $s, \ell \geq 0$ such that $\ell$ odd. Then for sufficiently large $n$, the sign of $p^e_t(n) - p^o_t(n)$ is periodic with period $2^{s+1}$. In particular, when $t$ is odd the sign of $p^e_t(n) -p^o_t(n)$ is alternating for sufficiently large $n$.
\end{theorem}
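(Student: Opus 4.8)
The plan is to deduce the sign directly from the Rademacher-type exact formula for $p^e_t(n)-p^o_t(n)$ established earlier in the paper, by locating its dominant term. Recall that the $t$-core/$t$-quotient bijection gives $\#\mathcal H_t(\lambda)=\sum_i|\lambda^{(i)}|$ and $|\lambda|=|\text{$t$-core}(\lambda)|+t\sum_i|\lambda^{(i)}|$, so specializing the $t$-hook weight to $-1$ yields
$$\sum_{n\ge 0}\bigl(p^e_t(n)-p^o_t(n)\bigr)x^n \;=\; \frac{1}{(x;x)_\infty}\left(\frac{(x^t;x^{2t})_\infty}{(-x^t;x^{2t})_\infty}\right)^{\!t},$$
which, writing $x=e^{2\pi i\tau}$ and rewriting in $\eta$-quotient form, equals $q^{1/24}\,\eta(t\tau)^{2t}\eta(4t\tau)^{t}\bigl/\bigl(\eta(\tau)\eta(2t\tau)^{3t}\bigr)$, a weakly holomorphic modular form of weight $-1/2$ up to the usual $q^{1/24}$ normalization. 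Hence the Rademacher circle method produces an exact formula $p^e_t(n)-p^o_t(n)=\sum_{k\ge 1}A_{t,k}(n)\,B_{t,k}(n)$, where $A_{t,k}(n)=\sum_{0\le a<k,\ \gcd(a,k)=1}\omega_{a,k}\,e^{-2\pi i an/k}$ is a finite exponential sum (the $\omega_{a,k}$ being roots of unity coming from the $\eta$-multipliers, i.e. from Dedekind sums) and $B_{t,k}(n)>0$ is an explicit Bessel-type term whose exponential growth rate in $n$ is $e^{\frac{\pi}{k}\sqrt{2E(k)n/3}}$, with $E(k):=1-2\gcd(k,t)^2-\tfrac14\gcd(k,4t)^2+\tfrac32\gcd(k,2t)^2$ recording the order of growth of the $\eta$-quotient at the cusp $a/k$.

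The first step is to find which $k$ maximizes $E(k)/k^2$. Writing $t=2^s\ell$ with $\ell$ odd and $k=2^\sigma k_0$ with $k_0$ odd, and putting $g=\gcd(k_0,\ell)$, a short case analysis on $\sigma$ gives $E(k)=1-\tfrac34\,4^\sigma g^2$ when $\sigma\le s$, $E(k)=1+3\cdot 4^s g^2$ when $\sigma=s+1$, and $E(k)=1$ when $\sigma\ge s+2$; since $g\le k_0$, one checks that $E(k)/k^2$ is \emph{uniquely} maximized at $k=2^{s+1}$ (its value there being $\tfrac14(1/4^{s}+3)$, which for $s=0$ equals $\tfrac23$ of $p(n)$'s rate squared—recovering that $p^e_t-p^o_t$ has $p(n)$-order growth exactly when $t$ is odd). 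Consequently every term with $k\ne 2^{s+1}$ is exponentially smaller, so for all sufficiently large $n$
$$\operatorname{sgn}\bigl(p^e_t(n)-p^o_t(n)\bigr)\;=\;\operatorname{sgn}\bigl(A_{t,2^{s+1}}(n)\bigr),$$
and the right-hand side depends only on $n\bmod 2^{s+1}$, which already yields periodicity with period $2^{s+1}$.

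It then remains to evaluate $A:=A_{t,2^{s+1}}(n)$ precisely enough to see that it never vanishes (so the sign is defined for all large $n$) and that, for even $t$, its sign pattern does not collapse to a proper divisor of $2^{s+1}$. This is a complete exponential sum over $(\mathbb Z/2^{s+1})^{\times}$ in which the $\omega_{a,2^{s+1}}$ are fixed roots of unity depending on $a$ through a quadratic-type expression (Dedekind sums $s(a,2^{s+1})$ together with the contributions of the four $\eta$-factors), so $A$ is a Gauss-type sum with a linear shift by $n$; one evaluates it by splitting off the quadratic Gauss sum, or by a direct $2$-adic computation, using also that conjugate pairs $(a,2^{s+1}-a)$ keep $A$ real. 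For odd $t$ ($s=0$) only the term $a=1$ survives and $A$ is a nonzero real constant times $(-1)^n$, so the sign alternates—and this case in fact follows already from Theorem~\ref{Even and Odd T Behavior}(2), since there $p^e_t(n)-p^o_t(n)\sim\pm\,2^{-(t-1)/2}p(n)$.

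I expect the main obstacle to be exactly this last evaluation: tracking the $\eta$-multiplier/Dedekind-sum data attached to the cusps $a/2^{s+1}$ carefully enough to prove both the non-vanishing of $A_{t,2^{s+1}}(n)$ and that its least period is $2^{s+1}$. By contrast, the optimization of $E(k)/k^2$, though it is the conceptual core, is an elementary finite computation, and the reduction of the sign to a single dominant exponential sum is standard once the exact formula is in hand.
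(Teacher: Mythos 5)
Your strategy is the paper's own: apply the Rademacher circle method to Han's eta-quotient generating function, locate the dominant Bessel term by maximizing the growth exponent over $k$, conclude that it sits uniquely at $k=2^{s+1}$, and read off the sign from the attached exponential sum, whose dependence on $n$ only through $e^{-\pi i n h/2^{s}}$ gives the period $2^{s+1}$. Your case analysis of $E(k)$ --- namely $1-\tfrac{3}{4}4^{\sigma}g^{2}$, $1+3\cdot 4^{s}g^{2}$, and $1$ according as the exact power of $2$ dividing $k$ is $2^{\sigma}$ with $\sigma\le s$, is $2^{s+1}$, or is at least $2^{s+2}$ --- agrees exactly with the three cases of the transformation law \eqref{Transformation Law for G_t} and with Corollary \ref{Dominating term of A_t}, and the identification of $k=2^{s+1}$ as the unique maximizer is correct.

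The one genuine gap is the step you yourself flag as the ``main obstacle'': the non-vanishing of $A_{t,2^{s+1}}(n)$. This is not optional bookkeeping --- if that exponential sum vanished on some residue class, the dominant term would disappear there and the sign would be governed by lower-order terms, so periodicity would not follow; gesturing at ``a Gauss-type sum\dots splitting off the quadratic Gauss sum, or a direct $2$-adic computation'' does not close this. The paper closes it as follows. Since Dedekind sums satisfy $s(du,dv)=s(u,v)$ and $s(u,2)=0$, all multiplier factors coming from $\eta(t\tau)$, $\eta(2t\tau)$, $\eta(4t\tau)$ collapse at $k=2^{s+1}$ and $w(t,h,2^{s+1})=\omega_{h,2^{s+1}}$, so the sum is exactly the classical Kloosterman sum from Rademacher's formula for $p(n)$, written in Lehmer's form as $\tfrac12\sqrt{k/12}\sum_{x^{2}\equiv 1-24n\ (\mathrm{mod}\ 24k)}\chi_{12}(x)e\bigl(\tfrac{x}{12k}\bigr)$ with $k$ a power of $2$ (Lemma \ref{Kloosterman sum nonzero} and the lemma preceding it). The eight solutions $x$ split into two quadruples $\{\pm x,\pm(12k+x)\}$ and $\{\pm y,\pm(12k+y)\}$, the sum equals $4\chi_{12}(x)\cos(\pi x/6k)+4\chi_{12}(y)\cos(\pi y/6k)$, and equality of the two cosines in absolute value would force $12k(1+3k)\equiv 0\pmod{24k}$, i.e.\ $1+3k$ even, impossible for $k$ a power of $2$. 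You need this argument (or a genuine substitute) to have a proof. Finally, your concern that for even $t$ the sign pattern might ``collapse to a proper divisor of $2^{s+1}$'' is not required by the statement: the theorem asserts only that $2^{s+1}$ is a period, not the minimal one, and the paper proves no more than that.
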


\begin{example*}
For $t = 6$ and sufficiently large $n$, we have that $p^e_t(n) - p^o_t(n) > 0$ if and only if $n \equiv 0, 1 \pmod{4}$.
\end{example*}

This paper is organized as follows. In Section 2, we state an exact formula for the difference $p^e_t(n) - p_t^o(n)$ (see Theorem \hyperref[Exact Formula]{\ref{Exact Formula}}), and prove the exact formula by applying the circle method of Rademacher to the generating function of $p^e_t(n) - p_t^o(n)$. In Section 3, we use this exact formula to analyze the limiting behavior of the distributions of $\delta_t^e(n)$ and $\delta_t^o(n)$. Although the consequences regarding distribution properties only require an asymptotic formula for $p_t^e(n) - p_t^o(n)$, for completeness we provide the exact formula. In Section 4, we conclude with remarks on the source of these surprising distribution properties as well as possible implications and generalizations of these results.

\section{Exact Formulas}

\subsection{Auxiliary Partition Functions}

Since $p^e_t(n) + p^o_t(n) = p(n)$, $A_t(n)$ can serve a useful auxiliary role in our study. The utility of the function $A_t(n)$ comes from the generating function 

\begin{equation} \label{A_t(n) Generating Function}
    G_t(x): = \sum\limits_{n \geq 0} A_t(n) x^n = \prod\limits_{k \geq 1} \dfrac{(1 - x^{4tk})^t(1 - x^{tk})^{2t}}{(1 - x^{2tk})^{3t}(1-x^k)},
\end{equation}

\noindent proven in Corollary 5.2 of \cite{Han10}. Using the generating function (\ref{A_t(n) Generating Function}), we prove the following exact formula for $A_t(n)$, given as a Rademacher-type infinite series expansion.

\begin{theorem} \label{Exact Formula}
If $n, t$ are positive integers with $t > 1$, then
\begin{alignat*}{2}
A_t(n) = \dfrac{2^{t/2}}{(24n-1)^{3/4}} \sum_{\substack{k \geq 1 \\ \gcd(k,2t) = 1}} \dfrac{\pi}{k} \sum_{\substack{0 \leq h < k \\ \gcd(h,k) = 1}} e^{\frac{-2\pi i n h}{k}} w(t,h,k) \sum_{m = 0}^{U_{t,k}} e^{\frac{2\pi i (4t)^* H m}{k}} c_1(t,h,k;m) &\\ \cdot \left( \dfrac{t - 24m}{t} \right)^{3/4} I_{\frac 32} \left( \dfrac{\pi}{12k} \sqrt{\dfrac{(t - 24m)(24n-1)}{t}} \right)& \\
+ \dfrac{2^{t/2}}{(24n-1)^{3/4}} \sum_{\substack{k \geq 1 \\ 2 || k_0}} \dfrac{2\pi}{k} \sum_{\substack{0 \leq h < k \\ \gcd(h,k) = 1}} e^{\frac{-2\pi i n h}{k}} w(t,h,k) \sum_{m = 0}^{U_{t,k}} e^{\frac{2\pi i (2^\dagger t_0^*) H m}{k}} c_2(t,h,k;m) &\\ \cdot \left( \dfrac{t_0(1 + 3\gcd(k,t)^2) - 12m}{t_0} \right)^{3/4} I_{\frac 32} \left( \dfrac{\pi}{6k} \sqrt{\dfrac{t_0(1+3\gcd(k,t)^2) - 12m)(24n-1)}{t_0}} \right) &\\
+ \dfrac{1}{(24n-1)^{3/4}} \sum_{\substack{k \geq 1 \\ 4 | k_0}} \dfrac{2\pi}{k} \sum_{\substack{0 \leq h < k \\ \gcd(h,k) = 1}} e^{\frac{-2\pi i n h}{k}} w(t,h,k) \sum_{m = 0}^{U_{t,k}} e^{\frac{2\pi i (t_0^* H) m}{k}} c_3(t,h,k;m) &\\ \cdot \left(\dfrac{t_0 - 24m}{t_0} \right)^{3/4} I_{\frac 32} \left( \dfrac{\pi}{6k} \sqrt{\dfrac{(t_0 - 24m)(24n-1)}{t_0}} \right),&
\end{alignat*}
where $k_0 := \dfrac{k}{\gcd(k,t)}$, $t_0 := \dfrac{t}{\gcd(k,t)}$, $H$ satisfies $hH \equiv -1 \pmod{k}$, $h^*$ (resp. $h^\dagger$) denotes the inverse of $h$ modulo $k_0$ (resp. $k_0/2$), $U_{t,k}$ is defined by
\begin{align*}
U_{t,k} := \begin{cases}
\left\lfloor \dfrac{t}{24} \right\rfloor & \textnormal{if } 2 \centernot | k_0, \\[0.2in]
\left\lfloor \dfrac{t_0(1 + 3\gcd(k,t)^2)}{12} \right\rfloor & \textnormal{if } 2 || k_0, \\[0.2in]
\left\lfloor \dfrac{t_0}{24} \right\rfloor & \textnormal{if } 4 | k_0,
\end{cases}
\end{align*}
$w(t,h,k)$ is defined by \eqref{w-def}, $c_j(t,h,k;m)$ are defined by \eqref{c-def}, and $I_{\frac 32}(z)$ is the classical modified $I$-Bessel function.
\end{theorem}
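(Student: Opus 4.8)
The plan is to apply the Hardy--Ramanujan--Rademacher circle method to the generating function $G_t(x)$ in \eqref{A_t(n) Generating Function}. The first step is to recognize $G_t$ as a rational power of $q$ times an eta-quotient: writing $q = e^{2\pi i \tau}$ and using $\prod_{k\geq 1}(1-q^{Nk}) = q^{-N/24}\eta(N\tau)$, a short computation collapses all the exponents of $q$ and gives
\[
G_t(q) = q^{1/24}\,\frac{\eta(4t\tau)^t\,\eta(t\tau)^{2t}}{\eta(2t\tau)^{3t}\,\eta(\tau)},
\]
so that $F_t(\tau) := q^{-1/24}G_t(q)$ is a weakly holomorphic modular form of weight $-\tfrac12$ on a congruence subgroup of $\mathrm{SL}_2(\ZZ)$ (level $4t$), with multiplier inherited from the Dedekind eta multiplier. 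Then $A_t(n)$ is recovered as the coefficient of $q^{\,n-1/24}$ in $F_t$, i.e.\ by a contour integral of $F_t(\tau)q^{-n+1/24-1}$ over a circle just inside $|q|=1$, and we are exactly in the situation for which Rademacher's exact-formula machinery applies; the relevant Bessel index is $1-(-\tfrac12) = \tfrac32$, which is the source of the $I_{3/2}$ appearing throughout the statement, and the exponent $\tfrac34 = (1-w)/2$ is the source of the powers $\big(\tfrac{t-24m}{t}\big)^{3/4}$, etc.

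Second, we dissect the circle along Farey fractions $h/k$ (equivalently, integrate over Ford circles), and on each arc we apply the element of $\mathrm{SL}_2(\ZZ)$ carrying $h/k$ to $i\infty$. The essential input is the transformation law of $\eta$ under a matrix with lower-left entry $k$; combining the four eta factors produces, on the one hand, the multiplier $w(t,h,k)$ of \eqref{w-def} (a product of Dedekind-sum exponentials with the root of unity $e^{2\pi i n h/k}$ attached), and on the other hand replaces each $\eta(Nt\tau)$ by $\eta$ of an argument whose behavior at $i\infty$ is governed by $\gcd(k,t)$ and by the $2$-adic valuation of $k_0 = k/\gcd(k,t)$. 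This is precisely what forces the split into the three cases $2\nmid k_0$, $2\,\|\,k_0$, $4\mid k_0$: the cusp width, the order of vanishing, and the leading constant of the transformed eta-quotient differ in each case (giving, e.g., the constant $2^{t/2}$ in the first two cases but $1$ in the third, and the constant $12$ versus $6$ inside the Bessel argument). Expanding the transformed form into its principal part then yields a finite sum $\sum_{m=0}^{U_{t,k}}$ with coefficients $c_j(t,h,k;m)$ from \eqref{c-def}, and the upper limit $U_{t,k}$ is read off as the largest $m$ for which the corresponding $q$-exponent is nonpositive, i.e.\ still contributes to the singular part — giving exactly the three displayed floor functions.

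Third, for each principal-part monomial $q^{-\mu}$ we carry out the standard Rademacher integral over the Ford-circle arc, complete it to a Hankel contour, and evaluate exactly; this produces $I_{3/2}$ with argument $\tfrac{\pi}{c_0 k}\sqrt{\mu\,(24n-1)}$ (with $c_0 = 12$ or $6$ depending on the case and $\mu$ the appropriate linear function of $m$), while the prefactors $2^{t/2}$ (or $1$) and $(24n-1)^{-3/4}$ come from the value and weight of the transformed eta-quotient computed in step two. Summing over all $h$, $k$, and $m$ gives the three nested series of the statement. Finally one must check absolute convergence of the whole expression: since the weight $-\tfrac12$ is negative, the trivial bound on the Kloosterman-type inner sums $\sum_h e^{-2\pi i n h/k}w(t,h,k)$ together with $I_{3/2}(z)\ll z^{3/2}$ for $z$ small forces the triple sum to converge (this is the Rademacher--Zuckerman convergence argument for negative-weight forms), so the circle-method identity is exact, not merely asymptotic.

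The main obstacle is the bookkeeping in the second step: correctly combining the four Dedekind-eta multipliers uniformly across the three arithmetic cases, identifying the exact cusp parameters $(k_0, t_0, \gcd(k,t))$ that control the transformed form, and verifying that the resulting principal parts are precisely the finite sums with upper limit $U_{t,k}$ and coefficients $c_j$ — in particular tracking the auxiliary inverses $h^*,h^\dagger,H$ and the roots of unity $e^{2\pi i(4t)^*Hm/k}$, etc. Once the transformation law at each cusp is pinned down, the contour integration and the convergence estimates are routine applications of Rademacher's method.
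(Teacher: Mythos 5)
Your proposal follows essentially the same route as the paper: Farey dissection of the Cauchy integral for the coefficients of $G_t$, transformation of each factor via the Dedekind-eta (equivalently $F(x)$) modular law with the three-way case split on the $2$-adic valuation of $k_0$, truncation to the principal part with upper limit $U_{t,k}$, and exact evaluation of the arc integrals as $I_{3/2}$ Bessel functions with the error vanishing as the Farey order $N\to\infty$. The only difference is presentational — you phrase it in the language of negative-weight weakly holomorphic forms and Rademacher--Zuckerman, while the paper works directly with $F(x)=\prod_m(1-x^m)^{-1}$ following Apostol — and your observation that $G_t(q)=q^{1/24}\,\eta(4t\tau)^t\eta(t\tau)^{2t}/(\eta(2t\tau)^{3t}\eta(\tau))$ correctly accounts for the $q^{1/24}$ prefactor.
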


\begin{example*}
We illustrate Theorem \ref{Exact Formula} using the numbers $A_t(d;n)$,
which denote partial sums for $A_t(n)$ over $1 \leq k \leq d$. Theorem \ref{Exact Formula} is therefore the statement that $\lim\limits_{d \rightarrow \infty} A_t(d;n) = A_t(n)$. We offer some examples in the table below.

\begin{table}[h]
\small
	\begin{tabular}{|c|c|c|c|c|c|c|c|} \hline
		\diagbox[width=\dimexpr \textwidth/6, height=1cm]{\quad\quad $n$}{$d$ \quad} & $10$  & $100$ &$1000$&$\cdots$ & $\infty$ \\ \hline
		$50$ &  $\approx 114580.084$& $\approx 114579.996$ & $\approx 114580.000$  & $\cdots$ & $114580$\\ \hline
		$100$  & $\approx 81486201.594$ & $\approx 81486198.001$ & $\approx 81486198.000$ & $\cdots$ & $81486198$\\ \hline
	\end{tabular}
	\vspace{0.5ex}
	\caption{Values of $A_3(d;n)$}
\end{table}

\end{example*}
\vspace{-5ex}
This exact formula gives the following corollary.

\begin{corollary}\label{Dominating term of A_t}
	For $t>1$ a fixed positive integer, write $t = 2^s\ell$ for integers $s, \ell \geq 0$ such that $\ell$ is odd. Then as $n \rightarrow \infty$ we have
	\begin{equation*} \label{Dominant Term}
	A_t(n) \sim \displaystyle\dfrac{\pi}{2^{s+\frac{t}{2}}}\bigg(\dfrac{1+ 3\cdot 4^s}{24n -1}\bigg)^\frac{3}{4}I_{\frac{3}{2}}\bigg(\dfrac{\pi\sqrt{(1 + 3\cdot 4^s)(24n -1)}}{6\cdot 2^{s+1}}\bigg)\sum_{\substack{0< h < 2^{s+1}\\h \text{ odd}}}w_2(t,h,2^{s+1})e^{-\frac{\pi i nh}{2^s}}.
	\end{equation*}
	
	In particular, when $t$ is odd we have 
	\begin{equation*} \label{Dominant Term when t odd}
	A_t(n) \sim \displaystyle (-1)^n \dfrac{\pi\cdot 2^{(3-t)/2}}{(24n-1)^{3/4}}I_{\frac{3}{2}}\bigg(\dfrac{\pi\sqrt{24n -1}}{6}\bigg).
	\end{equation*}
\end{corollary}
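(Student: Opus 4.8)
The plan is to read off the leading-order behavior of $A_t(n)$ directly from the three Rademacher sums in Theorem~\ref{Exact Formula}. The one analytic input is the classical asymptotic $I_{3/2}(z)\sim e^{z}/\sqrt{2\pi z}$ as $z\to\infty$: each summand of Theorem~\ref{Exact Formula} then grows like $e^{c\sqrt{24n-1}}$, where $c$ is the coefficient of $\sqrt{24n-1}$ in the argument of its $I_{3/2}$-factor. Hence the summand(s) realizing the \emph{largest} value of $c$ dominate, and everything else --- the remaining $(k,h,m)$ at that level together with the full tails in $k$ --- must be shown to be of strictly smaller exponential order. So the first task is to maximize $c$ over all terms of all three sums.

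In each of the three sums the index $m$ enters the relevant $I_{3/2}$-argument only through a factor that is $\le 1$ and strictly decreasing in $m$ (the quantities $t-24m$, $t_0(1+3\gcd(k,t)^2)-12m$, and $t_0-24m$ all shrink as $m$ grows), so $m=0$ is optimal everywhere. With $m=0$ fixed, write $g=\gcd(k,t)$ and $t=2^{s}\ell$ with $\ell$ odd. Tracking $2$-adic valuations shows that the defining congruence conditions force: $k$ odd in the first sum; $v_2(k)=s+1$ exactly in the second sum (so $k=2^{s+1}j$ with $j$ odd and $g=2^{s}\gcd(j,\ell)$); and $v_2(k)\ge s+2$ in the third. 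A short one-variable optimization of $\sqrt{1+3\cdot 4^{s}d^{2}}/k$ over these constraints (where $d=\gcd(j,\ell)$) then shows that $c$ is maximized, \emph{across all three sums and uniquely}, by the block $k=2^{s+1}$, $g=2^{s}$, $m=0$ of the second sum, where
\[
c\;=\;\frac{\pi\sqrt{1+3\cdot 4^{s}}}{12\cdot 2^{s}}\;=\;\frac{\pi\sqrt{1+3\cdot 4^{s}}}{6\cdot 2^{s+1}},
\]
which strictly beats the best the first sum can give ($c=\pi/12$, at $k=1$) and the best the third can give ($c\le \pi/(24\cdot 2^{s})$).

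Next I would isolate that block. For $k=2^{s+1}$ the inner $h$-sum of the second sum in Theorem~\ref{Exact Formula} runs over odd $h$ with $0<h<2^{s+1}$, and the phase $e^{-2\pi i n h/k}$ becomes $e^{-\pi i n h/2^{s}}$; setting $m=0$ collapses the $m$-sum, kills the $m$-dependent exponential, and replaces the amplitude $\big(\tfrac{t_0(1+3g^2)-12m}{t_0}\big)^{3/4}$ by $(1+3\cdot 4^{s})^{3/4}$. Collecting the prefactor $\tfrac{2^{t/2}}{(24n-1)^{3/4}}\cdot\tfrac{2\pi}{2^{s+1}}$ with this amplitude, and absorbing the surviving pieces of $w(t,h,2^{s+1})$, of $c_2(t,h,2^{s+1};0)$, and the residual powers of $2$ into a single coefficient $w_2(t,h,2^{s+1})$, produces exactly the displayed asymptotic. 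For the stated special case $t$ odd one has $s=0$: the $h$-sum degenerates to the single term $h=1$, with $e^{-\pi i n h/2^{s}}=(-1)^{n}$ and $w_2(t,1,2)=1$ by a direct evaluation of the multiplier; then $1+3\cdot 4^{s}=4$, the prefactor collapses to $\pi\cdot 2^{(3-t)/2}(24n-1)^{-3/4}$, and the Bessel argument becomes $\pi\sqrt{24n-1}/6$, giving the claimed formula.

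The main obstacle is making this dominance rigorous, i.e. bounding the contribution of all non-maximal terms. For $k$ in any fixed finite range this is the exponential comparison above (the gap in $c$ is bounded below), so the real issue is the tail $k\to\infty$: there the $I_{3/2}$-argument is $O(\sqrt{n}/k)$, so $I_{3/2}$ of it is $O\big((\sqrt{n}/k)^{3/2}\big)$, and the tail is controlled by a convergent series in $k$ once one inserts the uniform bounds on $w(t,h,k)$ and $c_j(t,h,k;m)$ established in the proof of Theorem~\ref{Exact Formula}; this makes the tail $o$ of the main term (indeed polynomially small in $n$). A secondary point, needed so that the asymptotic is not vacuous for the relevant residue classes of $n$ (and as input to Theorem~\ref{distribution property}), is that the leading exponential sum $\sum_{0<h<2^{s+1},\,h\ \mathrm{odd}} w_2(t,h,2^{s+1})\,e^{-\pi i n h/2^{s}}$ never vanishes; for odd $t$ this is clear since it equals $(-1)^{n}$, and in general it should follow from the explicit form of $w_2$.
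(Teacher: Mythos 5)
Your argument is essentially the paper's own proof: both reduce the corollary to the asymptotic $I_{3/2}(z)\sim e^z/\sqrt{2\pi z}$, observe that $m=0$ maximizes each Bessel argument, compare the three coefficient regimes $\pi/12k$, $\tfrac{\pi}{6k}\sqrt{1+3\gcd(k,t)^2}$, $\pi/6k$ to isolate the unique dominant term at $2\,\|\,k_0$, $k=2^{s+1}$, and invoke the non-vanishing of the Kloosterman sum (Lemma~\ref{Kloosterman sum nonzero}). Your additional remarks on the $k\to\infty$ tail and the explicit collapse of the dominant block are finer-grained than the paper's write-up but do not change the route.
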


\begin{proof}
	For $z \in \mathbb{R}^+$, it is known that $I_{\frac 32}(z) \sim \dfrac{e^z}{\sqrt{2\pi z}}$. From this asymptotic relation, we can derive a condition for isolating the dominant term in Theorem \ref{Exact Formula}. In particular, let $\{ g_i(n) \}_{i=0}^\infty$ be a countable collection of functions such that $\lim\limits_{n \to \infty} \dfrac{g_0(n)}{g_i(n)} > 1$ for all $i \not = 0$ and let $\{ a_i(n) \}_{i=0}^\infty$ be complex numbers each of which grow at most polynomially in $n$. Then if the series $\sum\limits_{i=0}^\infty a_i(n) I_{3/2}(g_i(n))$ converges and $a_0(n)$ does not vanish, we have
	$$\sum\limits_{i=0}^\infty a_i(n) I_{\frac 32}(g_i(n)) \sim a_0(n) I_{3/2}(g_0(n))$$
	as $n \to \infty$. This reduces the proof to an analysis of the analogs of $g_i(n)$ and $a_0(n)$ in Theorem \ref{Exact Formula}.
	
	Each of the arguments of $I_{\frac 32}(z)$ is maximized when $m = 0$, so we are left with the task of finding the largest possible value of of coefficients on $\sqrt{24n-1}$, which are given by $\dfrac{\pi}{12k}$ if $2 \centernot | k_0$, $\dfrac{\pi}{6k} \sqrt{1 + 3\gcd(k,t)^2}$ if $2 || k_0$, and $\dfrac{\pi}{6k}$ if $4 | k_0$. Among these, it is clear that the case where $2 || k_0$ is the largest. Now, this expression can be rewritten as
	$$\dfrac{\pi}{6k} \sqrt{1 + 3\gcd(k,t)^2} = \dfrac{\pi}{6k} \sqrt{1 + \dfrac{3}{k_0^2} k^2}.$$
	When $k_0$ is held fixed, since $k_0 \geq 2$ this expression is strictly decreasing in $k$, and therefore the optimal choice of $k$ must be of the form $k = 2^s k_0$. It is also clear that $k_0 = 2$ is optimal, and therefore $k = 2^{s+1}$ has the dominant $I$-Bessel function. Since by Lemma \ref{Kloosterman sum nonzero} the associated Kloosterman sum does not vanish, this completes the proof.
\end{proof}

\subsection{Proof of Theorem \ref{Exact Formula}}

\subsubsection{The Circle Method}
The approach that will be utilized in the proof of Theorem \ref{Exact Formula} is commonly referred to as the ``circle method". Initially developed by Hardy and Ramanujan and refined by Rademacher, the circle method has been employed with great success for the past century in additive number theory. The crowning achievement of the circle method lies in producing an exact formula for the partition function $p(n)$, and it has been utilized to produce similar exact formulas for variants of the partition function. A helpful and instructive sketch of the application of Rademacher's circle method to the partition function $p(n)$ is given in Chapter 5 of \cite{GTM41}. Here, we will provide a summary of the circle method, in order to clarify the key steps and the general flow of the argument.

The function $A_t(n)$ has as its generating function $G_t(x)$. Our objective is to use $G_t(x)$ to produce an exact formula for $A_t(n)$. Consider the Laurent expansion of $G_t(x)/x^{n+1}$ in the punctured unit disk. This function has a pole at $x = 0$ with residue $p(n)$ and no other poles. Therefore, by Cauchy's residue theorem we have
\begin{equation}
A_t(n) = \dfrac{1}{2\pi i} \int_C \dfrac{G_t(x)}{x^{n+1}} dx,
\end{equation}
where $C$ is any simple closed curve in the unit disk that contains the origin in its interior. The task of the circle method is to choose a curve $C$ that allows us to evaluate this integral, and this is achieved by choosing $C$ to lie near the singularities of $G_t(x)$, which are the roots of unity. For every positive integer $N$ and every pair of coprime non-negative integers $0 \leq h < k \leq N$, we choose a special contour $C$ in the complex upper half-plane and divide this contour into arcs $C_{h,k}$ near the roots of unity $e^{2\pi i h/k}$. Integration along $C$ can then be expressed as a finite sum of integrals along the arcs $C_{h,k}$, and elementary functions $\psi_{h,k}$ are chosen with behavior similar to $G_t(x)$ near the singularity $e^{2\pi ih/k}$. The functions $\psi_{h,k}$ are found by using properties of $G_t(x)$ deduced from the functional equation of the Dedekind eta function $\eta(\tau) := e^{\pi i \tau / 12} \prod\limits_{n \geq 1} (1 - e^{2\pi i n \tau})$ and the relation between $G_t(x)$ and $\eta(\tau)$ given by
\begin{equation}\label{G_t in eta}
G_t(e^{2\pi i \tau}) = \dfrac{\eta(t\tau)^{2t} \eta(4t\tau)^t}{\eta(\tau)\eta(2t\tau)^{3t}}.
\end{equation}
The error created by replacing $G_t(x)$ by $\psi_{h,k}(x)$ can be estimated, and the integrals of the $\psi_{h,k}$ along $C_{h,k}$ evaluated. This procedure produces estimates that can be used to formulate a convergent series for $A_t(n)$. Our implementation of the circle method will follow along these same lines.

\subsubsection{Notation}

To preface the proof of Theorem \ref{Exact Formula}, we summarize notation which will be used prominently throughout the rest of the paper. The values of $t, n, h$, and $k$ are always non-negative integers. Additionally, we assume $t > 1$ and that $h, k$ satisfy $0 \leq h < k$ and $\gcd(h,k) = 1$. Frequently, it is necessary to remove common factors between $k$ and $t$, and so we define $k_0 := \dfrac{k}{\gcd(k,t)}$ and $t_0 := \dfrac{t}{\gcd(k,t)}$. We will also make use of multiplicative inverses to a variety of moduli, and use distinct notations to distinguish these. We will always denote by $H$ an integer satisfying $hH \equiv -1 \pmod{k}$, and $h^*, h^\dagger$ will denote inverses of $h$ modulo $k_0$ and $k_0/2$ respectively. The complex numbers $x$ and $z$ are related by $x = \exp\left( \dfrac{2\pi i}{k} \left( h + iz \right) \right)$. Note that although $x$ depends on $h$ and $k$, the dependence is suppressed since these values will be clear in context. The notation $x'$ will always be used to denote a modular transformation of the variable $x$. The modular transformations also make use of the Dedekind sum $s(u,v)$, which for any integers $u,v$ is given by
\begin{align*}
s(u,v) := \sum_{m = 1}^v \left(\left( \dfrac{m}{v} \right)\right) \left(\left( \dfrac{um}{v} \right)\right)
\end{align*}
where
\begin{align*}
((m)) := \begin{cases}
m - \lfloor m \rfloor - \dfrac{1}{2} & m \not \in \mathbb{Z}, \\
0 & m \in \mathbb{Z}.
\end{cases}
\end{align*}
These Dedekind sums will always arise in the context of certain roots of unity $e^{\pi i s(u,v)}$, and so it is convenient to adopt the notation $\omega_{u,v} := e^{\pi i s(u,v)}$.

\subsubsection{Transformation formula for $G_t(x)$}
We first recall the transformation formula for the generating function of $p(n)$ (see, for example \cite{Hagis71} or P.96 in \cite{GTM41}).

\begin{theorem}\label{transformation formula for F}
	Let $k,t$ be positive integers with $t > 1$ and $0 \leq h < k$ an integer coprime to $k$ and $H$ an integer satisfying $hH \equiv -1 \pmod{k}$. Let $z$ be a complex number with $\textnormal{Re}(z) > 0$ and let $x, x'$ be defined by $x = \exp\bigg(\dfrac{2\pi i}{k} (h + iz) \bigg)$ and $x' = \exp\bigg(\dfrac{2\pi i}{k} \left(  H + \dfrac{i}{z} \right)\bigg)$. If $F(x)$ is defined by $F(x) := \prod\limits_{m=1}^\infty \dfrac{1}{1 - x^m}$, then \begin{equation*}
	F(x)  = \sqrt{z} \cdot \omega_{h,k} \exp \bigg(\dfrac{\pi(z^{-1} -z)}{12k}\bigg)F(x').
	\end{equation*}
\end{theorem}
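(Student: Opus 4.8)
The plan is to derive this from the classical modular transformation of the Dedekind eta function. Write $x = e^{2\pi i\tau}$ with $\tau = \frac{h+iz}{k}$, so that $\mathrm{Im}\,\tau = \mathrm{Re}(z)/k > 0$. From $\eta(\tau) = e^{\pi i\tau/12}\prod_{m\geq 1}(1-e^{2\pi i m\tau})$ one reads off immediately
\[
F(x) = \prod_{m\geq 1}\frac{1}{1-x^m} = \frac{e^{\pi i\tau/12}}{\eta(\tau)},
\]
and likewise $F(x') = e^{\pi i\tau'/12}/\eta(\tau')$, where $x' = e^{2\pi i\tau'}$ and $\tau' = \frac{H+i/z}{k}$. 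Thus the identity to be proved is equivalent to an explicit relation between $\eta(\tau')$ and $\eta(\tau)$.

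The first step is to recognize $\tau'$ as a modular image of $\tau$. A direct computation using $k\tau - h = iz$ shows that $\tau' = \gamma\tau$ for $\gamma = \begin{pmatrix} H & -\frac{Hh+1}{k} \\ k & -h \end{pmatrix}$; this matrix lies in $\mathrm{SL}_2(\mathbb{Z})$ since its determinant is $-Hh + (Hh+1) = 1$ and its upper-right entry is an integer precisely because $hH\equiv -1\pmod k$. The second step is to invoke the eta multiplier system: for $\left(\begin{smallmatrix} a & b \\ c & d\end{smallmatrix}\right)\in\mathrm{SL}_2(\mathbb{Z})$ with $c>0$,
\[
\eta\!\left(\frac{a\tau+b}{c\tau+d}\right) = \exp\!\left(\pi i\Big(\tfrac{a+d}{12c} - s(d,c)\Big)\right)\bigl(-i(c\tau+d)\bigr)^{1/2}\eta(\tau),
\]
with the principal branch of the square root. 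Applying this to $\gamma$, so that $(a,d) = (H,-h)$, $c = k$, and $c\tau+d = k\tau - h = iz$, hence $-i(c\tau+d) = z$ and $\bigl(-i(c\tau+d)\bigr)^{1/2} = \sqrt z$ (legitimate since $\mathrm{Re}\,z>0$), and using $s(-h,k) = -s(h,k)$ together with $\omega_{h,k} = e^{\pi i s(h,k)}$, gives
\[
\eta(\tau') = \omega_{h,k}\,\exp\!\left(\tfrac{\pi i(H-h)}{12k}\right)\sqrt z\;\eta(\tau).
\]

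The final step is pure bookkeeping. Combining the displays,
\[
\frac{F(x)}{F(x')} = e^{\pi i(\tau-\tau')/12}\,\frac{\eta(\tau')}{\eta(\tau)} = e^{\pi i(\tau-\tau')/12}\,\omega_{h,k}\,\exp\!\left(\tfrac{\pi i(H-h)}{12k}\right)\sqrt z,
\]
and since $\tau - \tau' = \frac{(h-H) + i(z - z^{-1})}{k}$ one has $e^{\pi i(\tau-\tau')/12} = e^{\pi i(h-H)/(12k)}\,e^{\pi(z^{-1}-z)/(12k)}$. The two roots of unity $e^{\pi i(h-H)/(12k)}$ and $e^{\pi i(H-h)/(12k)}$ cancel, leaving exactly $F(x) = \sqrt z\,\omega_{h,k}\exp\!\big(\frac{\pi(z^{-1}-z)}{12k}\big)F(x')$. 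I expect the only genuinely delicate point to be matching conventions in the eta multiplier — pinning down the correct branch of $\bigl(-i(c\tau+d)\bigr)^{1/2}$ and the precise root of unity (the usual $-\tfrac14$ and sign-of-$c$ subtleties that different references package differently), after which everything is routine algebra; note that the hypothesis $t>1$ is not used in this particular statement. As an alternative one could bypass $\eta$ entirely and follow Rademacher's original route, applying the transformation formula for $\log F(x)$ obtained by residue calculus from $\sum_{m\geq 1}\log(1-x^m)$, but the eta-function derivation is shorter and is the one cited in the sources referenced above.
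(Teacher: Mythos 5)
Your derivation is correct, and it is essentially the standard argument: the paper states this theorem without proof, citing Hagis and p.~96 of Apostol's book, and the proof given there is exactly this reduction to the Dedekind eta multiplier system via $F(e^{2\pi i\tau}) = e^{\pi i\tau/12}/\eta(\tau)$ and the matrix $\gamma=\left(\begin{smallmatrix} H & -(Hh+1)/k \\ k & -h\end{smallmatrix}\right)$. Your bookkeeping of the multiplier (using $s(-h,k)=-s(h,k)$) and of the branch of $(-i(c\tau+d))^{1/2}=\sqrt{z}$ checks out, so there is nothing to add.
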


By (\ref{G_t in eta}), $G_t(x)$ can be expressed in terms of $F(x)$:
$$G_t(x)  = \dfrac{F(x)\big[F\big(x^{2t}\big)\big]^{3t}}{\big[F\big(x^{t}\big)\big]^{2t}\big[F\big(x^{4t}\big)\big]^{t}}.$$
We can therefore apply Theorem \hyperref[transformation formula for F]{\ref{transformation formula for F}} to find a similar transformation formula for $G_t(x)$.

\begin{lemma} \label{Transformation Laws}
Define $x_1 := x^t$, $x_2 := x^{2t}$, and $x_3 := x^{4t}$. Then the following transformation formulas for $F(x_j)$ hold. \\

\noindent (a) When $k_0$ is odd, for $1 \leq j \leq 3$ we have $$F(x_j) = \sqrt{2^{j-1} t_0 z} \cdot \omega_{2^{j-1} t_0 h, k_0} \exp \bigg[ \dfrac{\pi}{12k_0} \bigg( \dfrac{1}{2^{j-1} t_0 z} - 2^{j-1} t_0 z \bigg) \bigg] F(x_j')$$
hold, where $x_j' = \exp\bigg[\dfrac{2\pi i}{k_0} \bigg( (2^{j-1} t_0)^* H + \dfrac{i}{2^{j-1} t_0 z}\bigg)\bigg]$. \\

\noindent (b) Suppose $k_0 \equiv 2 \pmod{4}$. Then we have the transformation formulas
$$F(x_1) = \sqrt{t_0 z} \cdot \omega_{t_0 h, k_0} \exp\Bigg[ \dfrac{\pi}{12k_0} \bigg( \dfrac{1}{t_0 z} - t_0 z \bigg) \Bigg] F(x_1')$$
where $x_1' = \exp\bigg[\dfrac{2\pi i}{k_0} \bigg( t_0^* H + \dfrac{i}{t_0 z}\bigg)\bigg]$, and for $j = 2, 3$ we have
$$F(x_j) = \sqrt{2^{j-2} t_0 z} \cdot \omega_{2^{j-2} t_0 h, k_0/2} \exp\Bigg[ \dfrac{\pi}{6k_0} \bigg( \dfrac{1}{2^{j-2} t_0 z} - 2^{j-2} t_0 z \bigg) \Bigg] F(x_j'),$$
where $x_2' = \exp\bigg[\dfrac{2\pi i}{k_0 / 2} \bigg( t_0^* H + \dfrac{i}{t_0 z}\bigg)\bigg],$ and $x_3' = \exp\bigg[\dfrac{2\pi i}{k_0 / 2} \bigg( 2^\dagger t_0^* H + \dfrac{i}{t_0 z}\bigg)\bigg].$ \\

\noindent (c) Suppose $4 | k_0$. Then we have the transformation formulas
$$F(x_j) = \sqrt{t_0 z} \cdot \omega_{t_0 h, k_0 / 2^{j-1}} \exp\Bigg[ \dfrac{2^{3-j} \pi}{12k_0} \bigg( \dfrac{1}{t_0 z} - t_0 z \bigg) \Bigg] F(x_j'),$$
where $x_j' = \exp\Bigg[ \dfrac{2\pi i}{k_0} \bigg( 2^{j-1} t_0^* H + \dfrac{i}{2^{1-j}t_0 z} \bigg) \Bigg]$.
\end{lemma}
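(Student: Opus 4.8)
The plan is to derive all three families of transformation laws from the single transformation formula for $F(x)$ recorded in Theorem~\ref{transformation formula for F}, applied to the dilated variable $x_j = x^{2^{j-1}t}$. The first step is a purely algebraic reduction: substituting $x = \exp\!\left(\tfrac{2\pi i}{k}(h+iz)\right)$ and using $t = t_0\gcd(k,t)$ and $k = k_0\gcd(k,t)$, one obtains
\[
x_j = \exp\!\left(\frac{2\pi i\,2^{j-1}t_0}{k_0}(h+iz)\right),
\]
so that $x_j$ is again in the form required by Theorem~\ref{transformation formula for F}, but with modulus equal to the denominator of the fraction $\tfrac{2^{j-1}t_0}{k_0}$ in lowest terms. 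Since $\gcd(t_0,k_0)=1$, that denominator is $K := k_0/\gcd(2^{j-1},k_0)$, and the point of splitting into the cases (a) $k_0$ odd, (b) $k_0\equiv 2\pmod 4$, (c) $4\mid k_0$ is precisely to control $\gcd(2^{j-1},k_0)$ as $j$ ranges over $1,2,3$.

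In each case I would proceed as follows. First, write $x_j = \exp\!\left(\tfrac{2\pi i}{K}(h_j + iz_j)\right)$ with $K$ as above, $z_j = \bigl(2^{j-1}t_0/\gcd(2^{j-1},k_0)\bigr)z$, and $h_j = \bigl(2^{j-1}t_0/\gcd(2^{j-1},k_0)\bigr)h$. Second, check the hypothesis $\gcd(h_j,K)=1$: this holds because $\gcd(t_0,k_0)=1$ gives $\gcd(t_0,K)=1$, because $\gcd(h,k)=1$ together with $K\mid k_0\mid k$ gives $\gcd(h,K)=1$, and because the residual power of $2$ left in $h_j$ is coprime to $K$ --- indeed, if any power of $2$ remains uncancelled, then $k_0$ contained fewer factors of $2$ than $2^{j-1}$, hence $K$ is odd. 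Third, apply Theorem~\ref{transformation formula for F} with $(h,k,z)$ replaced by $(h_j,K,z_j)$; this immediately produces the factor $\sqrt{z_j}$, the root of unity $\omega_{h_j,K}$, the exponential $\exp\!\bigl[\tfrac{\pi}{12K}(z_j^{-1}-z_j)\bigr]$, and the image variable $x_j' = \exp\!\left(\tfrac{2\pi i}{K}(H_j + i/z_j)\right)$, where $H_j$ is any integer with $h_j H_j\equiv -1\pmod K$. To recognize the stated form of $x_j'$ one takes $H_j = (2^{j-1}t_0)^{*}H$ with the inverse understood modulo $K$: indeed $hH\equiv -1\pmod k$ descends to $hH\equiv -1\pmod K$, and the inverses of $2$ and of $t_0$ modulo $K$ are precisely the symbols $h^{\dagger}$ and $h^{*}$ (according as $K=k_0/2$ or $K=k_0$) fixed in the notation section. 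Finally, re-expanding $\tfrac{2\pi i}{K}$ as $\tfrac{2\pi i}{k_0}\gcd(2^{j-1},k_0)$ rewrites the whole identity over the common modulus $k_0$ (or $k_0/2$), which is the shape in which parts (b) and (c) are stated.

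Running through the cases: in (a) nothing cancels, so $K=k_0$ and $z_j=2^{j-1}t_0z$ for every $j$, giving the uniform formula; in (b) nothing cancels for $j=1$ while one factor of $2$ cancels for $j=2,3$, so $K=k_0$ for $j=1$ and $K=k_0/2$ for $j=2,3$, accounting for the change of modulus in $\omega$, the passage from $\tfrac{\pi}{12k_0}$ to $\tfrac{\pi}{6k_0}$, and the appearance of $h^{\dagger}$; in (c) the whole factor $2^{j-1}$ cancels, so $K=k_0/2^{j-1}$, $z_j=t_0z$ is independent of $j$, and re-expanding over $k_0$ yields the displayed powers of $2$ in the exponential prefactor and in $x_j'$ together with the shifted inverse $2^{j-1}t_0^{*}H$. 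The algebraic simplifications are routine; the one step that genuinely requires care is this coprimality-and-inverse bookkeeping --- choosing the correct modulus ($k_0$ versus $k_0/2$ versus $k_0/2^{j-1}$) in each case, checking $\gcd(h_j,K)=1$, and reducing $(2^{j-1}t_0)^{*}$ to that smaller modulus --- since it is exactly this that distinguishes the three cases. I expect no conceptual obstacle: all the analytic content is supplied by Theorem~\ref{transformation formula for F} (the classical transformation of $\prod_{m\geq1}(1-x^m)^{-1}$ underlying the Rademacher formula for $p(n)$; see Chapter~5 of \cite{GTM41}), and Lemma~\ref{Transformation Laws} is a careful transcription of its consequences for the three relevant dilations of $x$.
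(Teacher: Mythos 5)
Your proposal is correct and takes essentially the same route as the paper: the paper's proof applies Theorem~\ref{transformation formula for F} to $x_j$ after rewriting it with coprime ``$h$'' and ``$k$'', writes out only the case $j=1$ of (a), and asserts the remaining cases follow by the same rearrangement, so your explicit bookkeeping of $K=k_0/\gcd(2^{j-1},k_0)$, the coprimality check, and the reduction of inverses is a more complete version of the same argument. One remark: in case (c) your (correct) computation produces the prefactor $\exp\bigl[\tfrac{\pi}{12K}(\tfrac{1}{t_0z}-t_0z)\bigr]$ with $K=k_0/2^{j-1}$, i.e.\ $\tfrac{2^{j-1}\pi}{12k_0}$ rather than the $\tfrac{2^{3-j}\pi}{12k_0}$ printed in the lemma; the exponent $2^{j-1}$ is what is needed for the displayed transformation law of $G_t(x)$ when $4\mid k_0$ (the contributions must cancel as $3\cdot 2-2\cdot 1-4=0$), so this is a typo in the statement rather than a flaw in your argument.
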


\begin{proof}
We first prove the case $j=1$ of (a) By definition, $x_1 = \exp \bigg( \dfrac{2\pi i}{k_0} (t_0 h + i t_0 z) \bigg)$. Since $\gcd(t_0 h, k_0) = 1$ and $t_0 h (t_0^* H) \equiv -1 \pmod{k_0}$, applying Theorem \hyperref[transformation formula for F]{\ref{transformation formula for F}} gives the result by the substitutions $h \mapsto t_0 h$, $k \mapsto k_0$, and $z \mapsto t_0 z$. Every other case of the result follows by rearranging terms in $x_j$ in a manner such that the terms playing the roles of $h$ and $k$ in Theorem \ref{transformation formula for F} are coprime.
\end{proof}

Using these identities for each case, we obtain the transformation law
\begin{align*}
G_t(x) = \begin{cases}
2^{t/2} \sqrt{z} \exp \bigg[ \dfrac{\pi}{12k} \bigg( \dfrac{4 - 3\gcd(k,t)^2}{4z} - z \bigg) \bigg] w_1(t,h,k) \dfrac{F(x') [F(x_2')]^{3t}}{[F(x_1')]^{2t}[F(x_3')]^t}& 2 \centernot | k_0, \\[+0.2in] 
 2^{t/2} \sqrt{z} \exp\bigg[ \dfrac{\pi}{12k}\bigg(\dfrac{1+3\gcd(k,t)^2}{z} - z\bigg)\bigg]w_2(t,h,k)\dfrac{F(x')\big[F(x_2')\big]^{3t}}{\big[F(x_1')\big]^{2t}\big[F(x_3')\big]^{t}} & 2 || k_0, \\[+0.2in] 
\sqrt{z}\exp\bigg[\dfrac{\pi}{12k}\left(\dfrac{1}{z} - z\right)\bigg]w_3(t,h,k)\dfrac{F(x')\big[F(x_2')\big]^{3t}}{\big[F(x_1')\big]^{2t}\big[F(x_3')\big]^{t}}& 4 | k_0,
\end{cases}
\end{align*}
\vspace{0.6ex}
where $w_1(t,h,k) := \omega_{h,k} \omega^{3t}_{2t_0 h, k_0} \omega^{-2t}_{t_0 h, k_0} \omega^{-t}_{4 t_0 h, k_0}$, $w_2(t,h,k) := \omega_{h,k} \omega^{3t}_{t_0 h, k_0/2} \omega^{-2t}_{t_0 h, k_0} \omega^{-t}_{2 t_0 h, k_0/2}$, and $w_3(t,h,k) := \omega_{h,k} \omega^{3t}_{t_0 h, k_0/4} \omega^{-2t}_{t_0 h, k_0} \omega^{-t}_{t_0 h, k_0/4}$. From the definition of $s(h,k)$ we can see that $s(dh, dk) = s(h,k)$ for every integer $d$, and therefore $\omega_{t_0 h, k_0/2} = \omega_{2t_0 h, k_0}$ when $2 | k_0$ and $\omega_{t_0 h, k_0/4} = \omega_{2t_0 h, k_0/2} = \omega_{4t_0 h, k_0}$ when $4 | k_0$. Therefore $w_j(t,h,k) = w(t,h,k)$ for all $j$, where
\begin{align} \label{w-def}
w(t,h,k) := \dfrac{\omega_{h,k} \omega^{3t}_{2t_0 h, k_0} }{\omega^{2t}_{t_0 h, k_0} \omega^{t}_{4 t_0 h, k_0}}.
\end{align}
Therefore, the transformation law for $G_t(x)$ can be rewritten as
\begin{align} \label{Transformation Law for G_t}
G_t(x) = \begin{cases}
2^{t/2} \sqrt{z} \exp \bigg[ \dfrac{\pi}{12k} \bigg( \dfrac{4 - 3\gcd(k,t)^2}{4z} - z \bigg) \bigg] w(t,h,k) J_{t,h,k}(x') & 2 \centernot | k_0, \\ 
 2^{t/2} \sqrt{z} \exp\bigg[ \dfrac{\pi}{12k}\bigg(\dfrac{1+3\gcd(k,t)^2}{z} - z\bigg)\bigg]w(t,h,k) J_{t,h,k}(x') & 2 || k_0, \\ 
\sqrt{z}\exp\bigg[\dfrac{\pi}{12k}\left(\dfrac{1}{z} - z\right)\bigg] w(t,h,k) J_{t,h,k}(x') & 4 | k_0,
\end{cases}
\end{align}
where for shorthand we define $J_{t,h,k}(x') := \dfrac{F(x')\big[F(x_2')\big]^{3t}}{\big[F(x_1')\big]^{2t}\big[F(x_3')\big]^{t}}$.

\subsubsection{A convergent series for $G_t(x)$}

\phantom{x}
We follow closely to the notations and proofs in Chapter 5 of \cite{GTM41}. Let notation be as before, and let $N$ be any positive integer. Recall that
\begin{align*}
G_t(x):= \sum\limits_{\lambda \in \mathcal P}x^{\vert \lambda \vert}(-1)^{H_t(\lambda)} =\sum\limits_{n \geq 0}\sum\limits_{\lambda \vdash n}(-1)^{H_t(\lambda)}x^n = \sum\limits_{n \geq 0}A_t(n)x^n.
\end{align*}
By Cauchy's residue theorem, we have
\begin{align*}
A_t(n) = \frac{1}{2\pi i}\int_C \frac{G_t(x)}{x^{n + 1}}\,dx,
\end{align*}
where $C$ is any positively oriented simple closed curve in a unit disk that contains the origin in its interior. In our implementation of the circle method, we set $C = C_N$ for $C_N$ centered at zero with radius $e^{-2\pi N^{-2}}$.  Using the transformations $x = e^{2\pi i \tau}$ and $z = -ik^2\bigg(\tau -\dfrac{h}{k}\bigg)$ in succession, the circle $C_N$ is mapped onto the circle $\mathcal{K}$ with center $\dfrac{1}{2}$ and radius $\dfrac{1}{2}$. In the rest of this proof, $\mathcal{K}$ will denote this same circle. If we breakdown $C_N$ into Farey arcs, then this change of variables gives the formula
\begin{eqnarray*}
	A_t(n) &=& \displaystyle\sum\limits_{k=1}^N \Bigg[\dfrac{i}{k^2} \sum\limits_{\substack{0\leq h < k\\(h,k) =1}}e^{-\frac{2\pi i nh}{k}}\int_{z_1(h,k)}^{z_2(h,k)} G_t\left( e^{\frac{2\pi i}{k}\left( h + \frac{iz}{k} \right)} \right) e^{\frac{2\pi nz}{k^2}}\,dz\Bigg],
\end{eqnarray*}
where the integral runs along the arc of $\mathcal{K}$ between the points $z_1(h,k)$ and $z_2(h,k)$ defined by
$$z_1(h,k) = \dfrac{k^2}{k^2 + k_1^2} + i \dfrac{kk_1}{k^2 + k_1^2} \ \ \text{ and } \ \ z_2(h,k) = \dfrac{k^2}{k^2 + k_2^2} - i \dfrac{kk_2}{k^2 + k_2^2},$$
where $k_1, k, k_2$ are the denominators of consecutive terms of the Farey series of order $N$. Computing $A_t(n)$ therefore reduces to computing the integrals
\begin{align*}
I(t,h,k,n) := \int_{z_1(h,k)}^{z_2(h,k)} G_t\left( e^{\frac{2\pi i}{k}\left( h + \frac{iz}{k} \right)} \right) e^{\frac{2\pi n z}{k^2}} dz.
\end{align*}
The first step to evaluating these integrals is an application of the transformation law for $G_t(x)$. Because of the formulation of \eqref{Transformation Law for G_t}, the exact formula is naturally broken into three sums. One of these is given by
$$\sum_{\substack{k \geq 1 \\ k_0 \textnormal{ odd}}} \sum\limits_{\substack{0 \leq h < k \\ \gcd(h,k) = 1}} e^{-2\pi i n h / k} I(t,h,k,n)$$
and the other two are defined similarly with the modification that $k_0$ odd is replaced by either $2 || k_0$ or $4 | k_0$. Because of this natural breakdown, the evaluation of $I(t,h,k,n)$ also naturally breaks into three cases.

In order to estimate the integrals $I(t,h,k,n)$, we will use a series expansion for the factor $J_{t,h,k}(x') = \dfrac{F(x') [F(x_2')]^{3t}}{[F(x_1')]^{2t}[F(x_3')]^t}$ in the modular transformation law for $G_t(x)$. The variable we will use for this series expansion will depend on the value of $k_0$. In particular, define $y_j$ for $1 \leq j \leq 3$ by
$$y_1 := e^{\frac{2\pi i}{k} \left( 4^* t_0^* H + \frac{i}{4 t_0 z} \right)}, \ \ \ y_2 := e^{\frac{2\pi i}{k} \left( 2^\dagger t_0^* H + \frac{i}{2 t_0 z} \right)}, \ \ \ y_3 := e^{\frac{2\pi i}{k} \left( t_0^* H + \frac{i}{t_0 z} \right)}.$$
The utility of using $y_j$ is that it relates nicely to the variables $x'$, $x_1'$, $x_2'$, and $x_3'$ appearing in $J_{t,h,k}(x')$. From definitions, it follows that
\vspace{-1ex}
\begin{center}
$$x_1' = y_1^{4\gcd(k,t)} = - y_2^{2\gcd(k,t)} = y_3^{\gcd(k,t)},$$
$$x_2' = y_1^{2\gcd(k,t)} = y_2^{4\gcd(k,t)} = y_3^{2\gcd(k,t)},$$
$$x_3' = y_1^{\gcd(k,t)} = y_2^{2\gcd(k,t)} = y_3^{4\gcd(k,t)},$$
\end{center}
and
$$x' = y_1^{4t_0} e^{\frac{-2\pi i \left( 4t_0(4t_0)^* - 1 \right) H}{k}} = y_2^{2t_0} e^{\frac{-2\pi i \left( 2 2^\dagger t_0 t_0^* - 1 \right)H}{k}} = y_3^{t_0} e^{\frac{-2\pi i \left( t_0 t_0^* - 1 \right) H}{k}}.$$ 
Therefore, we have three series expansions for $J_{t,h,k}(x')$ given by
\begin{align} \label{c-def}
J_{t,h,k}(x') =: \sum_{m \geq 0} c_j(t,h,k; m) y_j^m
\end{align}
for $1 \leq j \leq 3$. These series expansions, along with the transformation laws for $G_t(x)$, are used to aid in the evaluation of the integrals $I(t,h,k,n)$.

\subsubsection{Estimating $I(t,h,k,n)$}

The process of evaluating $I(t,h,k,n)$ breaks into three cases based on the value of $k_0$. Since the proofs in every case run along similar lines, we need only write out details in the case where $k_0$ is odd and to comment on which aspects of the proof need to be altered for the other two cases. When $k_0$ is odd, we use the series expansion for $J_{t,h,k}(x')$ in $y_1$. Applying the substitution $z \mapsto \dfrac{z}{k}$ in \eqref{Transformation Law for G_t}, we have
$$I(t,h,k,n) = \dfrac{2^{t/2} w(t,h,k)}{\sqrt{k}} \int_{z_1(h,k)}^{z_2(h,k)} \sum_{m \geq 0} e^{\frac{2\pi i (4t_0)^* H m}{k}} c_1(t,h,k;m) f_{k,t,m}(z) e^{\frac{2\pi n z}{k^2}} \, dz$$
where
$$f_{k,t,m}(z) := \sqrt{z} \exp \bigg[ \dfrac{\pi}{12} \bigg( \dfrac{4 - 3\gcd(k,t)^2}{4z} - \dfrac{6 m}{t_0 z} - \dfrac{z}{k^2} \bigg) \bigg].$$
From the theory of Farey arcs (see Theorem 5.9 of \cite{GTM41}) we know that the path of integration has length less than $2\sqrt{2} k N^{-1}$ and that for any $z$ on the path of integration, $|z| < \sqrt{2} k N^{-1}$. Furthermore, any $z \in \mathcal{K} \backslash \{0\}$ satisfies $0 < \textnormal{Re}(z) \leq 1$ and $\textnormal{Re}(1/z) = 1$. From these facts, we can see that $m > M_{t,k} := \left\lfloor \dfrac{t_0 \left( 4 - 3\gcd(k,t)^2 \right)}{24} \right\rfloor$ if and only if
$$\left| e^{\frac{\pi}{12} \left( \frac{4 - 3\gcd(k,t)^2}{4z} - \frac{6 m}{t_0 z} - \frac{z}{k^2} \right)} \right| < 1,$$
and that therefore
$$\int_{z_1(h,k)}^{z_2(h,k)} \sum_{m > M_{t,k}} e^{\frac{2\pi i (4t_0)^* H m}{k}} c_1(t,h,k;m) f_{k,t,m}(z) e^{\frac{2\pi n z}{k^2}} \, dz = O\left( k^{3/2} N^{-3/2} \right).$$
Applying this estimate to $I(t,h,k,n)$, it follows that
$$I(t,h,k,n) = \dfrac{2^{t/2} w(t,h,k)}{\sqrt{k}} \int_{z_1(h,k)}^{z_2(h,k)} \sum_{m = 0}^{M_{t,k}} e^{\frac{2\pi i (4t_0)^* H m}{k}} c_1(t,h,k;m) f_{k,t,m}(z) e^{\frac{2\pi n z}{k^2}} \, dz + O\left( k^{1/2} N^{-3/2} \right).$$
Similar estimates apply in the other two cases. In particular, extend the definition of $M_{t,k}$ by
\begin{align*}
M_{t,k} := \begin{cases}
\left\lfloor \dfrac{t_0(4 - 3\gcd(k,t)^2)}{24} \right\rfloor & \textnormal{if } 2 \centernot | k_0, \\[+0.2in]
\left\lfloor \dfrac{t_0(1 + 3\gcd(k,t)^2)}{12} \right\rfloor & \textnormal{if } 2 || k_0, \\[+0.2in]
\left\lfloor \dfrac{t_0}{24} \right\rfloor & \textnormal{if } 4 | k_0
\end{cases}
\end{align*}
and in place of $y_1$ use $y_2$ when $2 || k_0$ or $y_3$ when $4 | k_0$. These modifications lead to the following proposition.

\begin{proposition} \label{Reduce to Finite Sum}
Adopt all notation as above. Then if $k_0$ is odd,
\begin{align*}
I(t,h,k,n) =& \dfrac{2^{t/2} w(t,h,k)}{\sqrt{k}} \sum_{m = 0}^{M_{t,k}} e^{\frac{2\pi i (4t_0)^* H m}{k}} c_1(t,h,k;m) \\ &\cdot \int_{z_1(h,k)}^{z_2(h,k)} \sqrt{z} \exp \bigg[ \dfrac{\pi}{12} \left( \dfrac{4 - 3\gcd(k,t)^2}{4z} - \dfrac{6 m}{t_0 z} + \dfrac{(24n-1)z}{k^2} \right) \bigg] \, dz + O\left( k^{1/2} N^{-3/2} \right).
\end{align*}
If $2 || k_0$, then we have
\begin{align*}
I(t,h,k,n) =& \dfrac{2^{t/2} w(t,h,k)}{\sqrt{k}} \sum_{m = 0}^{M_{t,k}} e^{\frac{2\pi i (2^\dagger t_0^*) H m}{k}} c_2(t,h,k;m) \\ &\cdot \int_{z_1(h,k)}^{z_2(h,k)} \sqrt{z} \exp\bigg[ \dfrac{\pi}{12} \left( \dfrac{1+3\gcd(k,t)^2}{z} - \frac{12m}{t_0 z} + \dfrac{(24n-1)z}{k^2} \right) \bigg] \, dz + O\left( k^{1/2} N^{-3/2} \right).
\end{align*}
If $4 | k_0$, then we have
\begin{align*}
I(t,h,k,n) =& \dfrac{w(t,h,k)}{\sqrt{k}} \sum_{m = 0}^{M_{t,k}} e^{\frac{2\pi i (t_0^* H) m}{k}} c_3(t,h,k;m) \\ &\cdot \int_{z_1(h,k)}^{z_2(h,k)} \sqrt{z} \exp\left[ \dfrac{\pi}{12} \left( \dfrac{1}{z} - \dfrac{24 m}{t_0 z} + \dfrac{(24n-1)z}{k^2} \right) \right] \, dz + O\left( k^{1/2} N^{-3/2} \right).
\end{align*}
\end{proposition}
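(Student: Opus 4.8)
The plan is to evaluate each arc integral $I(t,h,k,n)$ by substituting the transformation law \eqref{Transformation Law for G_t} and then expanding the modular factor $J_{t,h,k}(x')$ as a power series. After the rescaling $z \mapsto z/k$ inside \eqref{Transformation Law for G_t}, the function $G_t$ is replaced by an elementary prefactor $2^{t/2}\sqrt{z}\, w(t,h,k)\exp[\tfrac{\pi}{12}(\cdots)]$ (with $2^{t/2}$ replaced by $1$ when $4 | k_0$) times $J_{t,h,k}(x')$; I then insert the expansion \eqref{c-def}, using $y_1$ when $k_0$ is odd, $y_2$ when $2 || k_0$, and $y_3$ when $4 | k_0$. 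Because every $z \in \mathcal{K}\setminus\{0\}$ satisfies $\textnormal{Re}(1/z) = 1$ and $\textnormal{Re}(z) = |z|^2$, one has $|y_j| < 1$ uniformly on each Farey arc, so \eqref{c-def} converges absolutely and uniformly there and sum and integral may be interchanged; this writes $I(t,h,k,n)$ as $\tfrac{2^{t/2}w(t,h,k)}{\sqrt{k}}$ (resp. $\tfrac{w(t,h,k)}{\sqrt{k}}$ when $4 | k_0$) times $\sum_{m \geq 0} e^{2\pi i(\cdots)Hm/k}\, c_j(t,h,k;m)\int_{z_1(h,k)}^{z_2(h,k)} f_{k,t,m}(z)\, e^{2\pi nz/k^2}\, dz$, with the appropriate $f_{k,t,m}$ in each case.

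The heart of the argument is the truncation at $m = M_{t,k}$. On $\mathcal{K}\setminus\{0\}$ the coefficient of $1/z$ in the exponent of $f_{k,t,m}$ --- namely $\tfrac{\pi}{12}\bigl(\tfrac{4-3\gcd(k,t)^2}{4} - \tfrac{6m}{t_0}\bigr)$ in the $k_0$-odd case, and its analogues $\tfrac{\pi}{12}\bigl(1+3\gcd(k,t)^2 - \tfrac{12m}{t_0}\bigr)$ and $\tfrac{\pi}{12}\bigl(1 - \tfrac{24m}{t_0}\bigr)$ otherwise --- is negative precisely when $m > M_{t,k}$, so for such $m$ the factor $|f_{k,t,m}(z)|$ is at most $|z|^{1/2}$ and in fact decays geometrically in $m - M_{t,k}$, while $e^{2\pi nz/k^2}$ has modulus $e^{2\pi n\,\textnormal{Re}(z)/k^2} \leq e^{2\pi n}$ on $\mathcal{K}$. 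To sum the tail one needs an $(h,k)$-independent bound $|c_j(t,h,k;m)| \leq \tilde{c}(m)$ of at most subexponential growth; this follows from writing $J_{t,h,k}$ via \eqref{G_t in eta} as an eta-quotient, expressing each of $F(x')$, $F(x_1')$, $F(x_2')$, $F(x_3')$ as a product $\prod_{\ell}(1 - \zeta_\ell\, y_j^{a\ell})^{\pm 1}$ with $|\zeta_\ell| = 1$ and a fixed integer $a$, and applying the triangle inequality term by term --- the numerator factors contribute partition-type coefficients and the denominator factors polynomially-growing ones, so the coefficients of the product still grow subexponentially and do so uniformly in $h,k$ (only $\gcd(k,t)$, which ranges over divisors of $t$, enters). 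Combining the geometric decay with the Farey-arc length bound $< 2\sqrt{2}\,kN^{-1}$ and modulus bound $|z| < \sqrt{2}\,kN^{-1}$ of Theorem~5.9 of \cite{GTM41}, the tail $\sum_{m > M_{t,k}}$ contributes, after the $k^{-1/2}$ prefactor, an error $O(k^{1/2}N^{-3/2})$, uniformly in $h$.

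It then remains to recast the surviving terms in the stated form. Since $e^{2\pi nz/k^2} = \exp[\tfrac{\pi}{12}\cdot\tfrac{24nz}{k^2}]$, multiplying by $f_{k,t,m}(z)$ cancels the $-z/k^2$ term and produces exactly $\sqrt{z}\exp[\tfrac{\pi}{12}(\tfrac{4-3\gcd(k,t)^2}{4z} - \tfrac{6m}{t_0 z} + \tfrac{(24n-1)z}{k^2})]$, which is the first assertion; the $2 || k_0$ and $4 | k_0$ cases follow identically from the second and third branches of \eqref{Transformation Law for G_t}, the expansions in $y_2$ and $y_3$, and the corresponding definitions of $M_{t,k}$, the only changes being the shape of the $1/z$-coefficient, the phase $e^{2\pi i(\cdots)Hm/k}$, and the power of $2^{t/2}$. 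I expect the main obstacle to be the uniform subexponential bound on $c_j(t,h,k;m)$ together with the verification that the sign change of the relevant $1/z$-coefficient occurs exactly at $m = M_{t,k}$: this needs careful tracking of how the $1/z$ contribution is split between the transformation prefactor of $G_t$ and the factor $y_j^m$ coming from \eqref{c-def}, and of the fact that the roots of unity relating $x'$, $x_1'$, $x_2'$, $x_3'$ to the $y_j$ do not disturb the product structure used to bound $\tilde{c}(m)$; a secondary point, the legitimacy of interchanging summation and integration, follows from the same geometric tail estimate via dominated convergence.
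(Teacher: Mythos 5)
Your proposal follows essentially the same route as the paper: substitute the transformation law \eqref{Transformation Law for G_t} with $z \mapsto z/k$, expand $J_{t,h,k}(x')$ in the appropriate $y_j$ via \eqref{c-def}, truncate at $m = M_{t,k}$ using the sign of the $1/z$-coefficient in the exponent, and bound the tail with the Farey-arc length and modulus estimates to obtain the $O(k^{1/2}N^{-3/2})$ error. The only difference is that you make explicit the uniform subexponential bound on the coefficients $c_j(t,h,k;m)$ needed to sum the geometric tail, a point the paper leaves implicit; this is a correct and welcome addition rather than a departure.
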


In light of Proposition \ref{Reduce to Finite Sum}, the problem of evaluating $I(t,h,k,n)$ is reduced to evaluating integrals of the form
$$\int_{z_1(h,k)}^{z_2(h,k)} \sqrt{z} \exp\left[ \dfrac{\pi}{12} \left( \dfrac{A - Bm}{z} + \dfrac{(24n-1z)}{k^2} \right) \right] \, dz$$
for certain constants $A,B$. This evaluation has two main steps. Firstly, we show that extending the path of integration to the whole circle $\mathcal{K}$ introduces only a small error term. Secondly, we show how the integral along $\mathcal{K}$ is expressible by familiar functions from analysis. These steps are carried out together in the following proposition.

\begin{proposition} \label{General I-Bessel}
Fix an integer $t > 1$, and let $A,B$ be constants independent of $z$ for which $A = O_{k}(1)$ as $N \to \infty$ and $B > 0$. Then we have
\begin{align*}
\dfrac{1}{2\pi i} \int_{z_1(h,k)}^{z_2(h,k)} \sqrt{z} e^{\frac{\pi}{12} \left( \frac{A - Bm}{z} + \frac{(24n-1)z}{k^2} \right)} \, dz =& \dfrac{k^{3/2} (A - Bm)^{3/4}}{(24n-1)^{3/4}} I_{\frac 32} \left( \dfrac{\pi}{6k} \sqrt{(A - Bm)(24n-1)} \right) \\ &+ O\left( k^{3/2} N^{-3/2} \right).
\end{align*}
\end{proposition}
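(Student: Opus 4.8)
The plan is to split the proof into the two standard pieces of the Rademacher argument (cf.\ Chapter~5 of \cite{GTM41}), treating $A$, $B$, $m$ as inert constants throughout. First, replace the arc of $\mathcal K$ from $z_1(h,k)$ to $z_2(h,k)$ by the whole circle $\mathcal K$, with a small error; second, evaluate the integral over $\mathcal K$ in closed form as an $I$-Bessel function.

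For the first piece, write the integral over the Farey arc as the integral over all of $\mathcal K$ minus the integral over the complementary arc $\gamma'$. The point of the Farey dissection is that $\gamma'$ sits in a shrinking neighbourhood of the origin: by the Farey-arc estimates recalled before Proposition~\ref{Reduce to Finite Sum} (Theorem~5.9 of \cite{GTM41}) one has $|z|<\sqrt2\,k/N$ on $\gamma'$ and $\operatorname{length}(\gamma')=O(k/N)$. On $\mathcal K\setminus\{0\}$ the elementary identities $\operatorname{Re}(1/z)=1$ and $\operatorname{Re}(z)=|z|^2$ give
\[
\Bigl|\sqrt z\,e^{\frac{\pi}{12}\left(\frac{A-Bm}{z}+\frac{(24n-1)z}{k^2}\right)}\Bigr|=|z|^{1/2}\,e^{\frac{\pi(A-Bm)}{12}}\,e^{\frac{\pi(24n-1)}{12k^2}|z|^2},
\]
so, since $A=O_k(1)$, $B>0$ (whence $A-Bm\le A$), and $|z|^2<2k^2/N^2$ on $\gamma'$, the integrand has modulus $O_{k,n}\bigl((k/N)^{1/2}\bigr)$ there; hence $\int_{\gamma'}$ contributes $O(k/N)\cdot O\bigl((k/N)^{1/2}\bigr)=O(k^{3/2}N^{-3/2})$. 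This reduces everything to evaluating $\frac{1}{2\pi i}\int_{\mathcal K}\sqrt z\,e^{\frac{\pi}{12}\left(\frac{A-Bm}{z}+\frac{(24n-1)z}{k^2}\right)}\,dz$.

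To evaluate that, put $\alpha:=\frac{\pi(A-Bm)}{12}$ and $\beta:=\frac{\pi(24n-1)}{12k^2}$, and assume first $A-Bm>0$ (the only case occurring for $0\le m\le M_{t,k}$; if $A-Bm=0$ the claimed $I_{3/2}$-term vanishes and $\frac{1}{2\pi i}\int_{\mathcal K}\sqrt z\,e^{\beta z}\,dz=0$ by Cauchy's theorem, since $\sqrt z\,e^{\beta z}$ is holomorphic inside $\mathcal K$ and continuous up to it, so the identity is trivial there). Under the substitution $w=1/z$ the circle $\mathcal K$ (which passes through $0$ and $1$) goes to the line $\operatorname{Re}(w)=1$, and $dz=-w^{-2}\,dw$, so the integral becomes $\frac{1}{2\pi i}\int_{1-i\infty}^{1+i\infty}w^{-5/2}e^{\alpha w+\beta/w}\,dw$, the orientation coming out exactly as in the $p(n)$ computation of \cite{GTM41}; rescaling $w=\alpha u$ moves the contour to $\operatorname{Re}(u)=\alpha>0$ and turns this into $\alpha^{3/2}\cdot\frac{1}{2\pi i}\int_{\alpha-i\infty}^{\alpha+i\infty}u^{-5/2}e^{u+\alpha\beta/u}\,du$. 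Now invoke the classical Hankel--Bessel identity
\[
\frac{1}{2\pi i}\int_{c-i\infty}^{c+i\infty}u^{-\nu-1}e^{u+\zeta/u}\,du=\zeta^{-\nu/2}I_{\nu}\!\left(2\sqrt\zeta\right)\qquad(c>0,\ \nu>0),
\]
which follows by expanding $e^{\zeta/u}$ and integrating term by term against Hankel's formula $\frac{1}{2\pi i}\int_{c-i\infty}^{c+i\infty}u^{-s}e^{u}\,du=1/\Gamma(s)$, taking $\nu=\frac32$ and $\zeta=\alpha\beta$. This gives $\alpha^{3/2}(\alpha\beta)^{-3/4}I_{3/2}(2\sqrt{\alpha\beta})=\alpha^{3/4}\beta^{-3/4}I_{3/2}(2\sqrt{\alpha\beta})$, and substituting back the values of $\alpha$ and $\beta$ converts $\alpha^{3/4}\beta^{-3/4}$ into $\frac{k^{3/2}(A-Bm)^{3/4}}{(24n-1)^{3/4}}$ and $2\sqrt{\alpha\beta}$ into $\frac{\pi}{6k}\sqrt{(A-Bm)(24n-1)}$, which is precisely the main term claimed.

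The hard part will be the orientation and branch bookkeeping in the second piece: tracking the orientation of $\mathcal K$ through the substitutions $x=e^{2\pi i\tau}$, $z=-ik^2(\tau-h/k)$ and $w=1/z$ so that the final $I_{3/2}$ emerges with the correct sign, and making compatible choices of the branches of $\sqrt z$, $w^{-5/2}$ and $u^{-5/2}$ (all the relevant contours lie in $\operatorname{Re}>0$, so no branch cut is crossed, but the identifications must be kept straight). I would also flag that the implied constant in the error term depends on $k$ and $n$ through the factors $e^{\pi(A-Bm)/12}$ and $e^{\pi(24n-1)/(6N^2)}$ from the first piece; this is harmless here, and in the application of Proposition~\ref{Reduce to Finite Sum} one has $|A|\le 1+3\gcd(k,t)^2\le 1+3t^2$, so the $k$-dependence is in fact uniform. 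Everything else is routine and runs exactly as in Chapter~5 of \cite{GTM41}.
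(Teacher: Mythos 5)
Your proposal follows essentially the same route as the paper's proof: complete the Farey arc to the full circle $\mathcal{K}$ using the standard estimates ($|z|<\sqrt{2}\,k/N$, $\operatorname{Re}(1/z)=1$, arc length $O(k/N)$) to absorb the complementary arcs into $O(k^{3/2}N^{-3/2})$, then substitute $w=1/z$, rescale the contour to $\operatorname{Re}=\alpha$, and invoke the Hankel--Bessel integral representation of $I_{3/2}$. The only differences are cosmetic refinements on your side (using $\operatorname{Re}(z)=|z|^2$ to make the error constant's $n$-dependence harmless, and treating $A-Bm=0$ separately), and the orientation/branch bookkeeping you defer to Apostol is exactly the part the paper also treats tersely, so the argument is correct and matches.
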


\begin{proof}
For $\mathcal{K}^-$ the negative orientation of the circle $\mathcal{K}$, we can break down integrals over $\mathcal{K}^-$ by
$$\int_{\mathcal{K}^-} = \int_{z_1(h,k)}^{z_2(h,k)} + \int_0^{z_1(h,k)} + \int_{z_2(h,k)}^0.$$
Define the function $f(z)$ by
$$f(z) := \sqrt{z} \exp\left[ \dfrac{\pi}{12} \left( \dfrac{A - Bm}{z} + \dfrac{(24n-1)z}{k^2} \right) \right].$$
Then by the theory of Farey arcs, the arc on $\mathcal{K}^-$ from $0$ to $z_1(h,k)$ has length less than $\pi |z_1(h,k)| < \sqrt{2} \pi k N^{-1}$ and therefore $|z| < \sqrt{2} k N^{-1}$ on the path of integration. Recalling that $\textnormal{Re}(1/z) = 1$ and $0 < \textnormal{Re}(z) \leq 1$ on $\mathcal{K} \backslash \{0\}$,
$$\left| \int_0^{z_1(h,k)} f(z) \, dz \right| \leq \dfrac{2^{3/4} \pi k^{3/2}}{N^{3/2}} \exp\left[ \dfrac{\pi}{12} \left( A + 24n-1 \right) \right] = O\left( k^{3/2} N^{-3/2} \right).$$
A similar estimate holds for integrals from $z_2(h,k)$ to $0$, and therefore we have
$$\int_{z_1(h,k)}^{z_2(h,k)} f(z) \, dz = \int_{\mathcal{K}^-} f(z) \, dz + O\left( k^{3/2} N^{-3/2} \right).$$
It suffices now to evaluate the integral
$$I := \int_{\mathcal{K}^-} \sqrt{z} \exp\left[ \dfrac{\pi}{12} \left( \dfrac{A - Bm}{z} + \dfrac{(24n-1)z}{k^2} \right) \right] \, dz.$$
The substitution $w = z^{-1}$, $dw = - z^{-2} dz$ implies
$$I = - \int_{1-i\infty}^{1+i\infty} w^{-5/2} \exp\left( \dfrac{\pi(A - Bm)}{12} w + \dfrac{\pi(24n-1)}{12k^2} w^{-1} \right) dw.$$
Furthermore, by the substitution $s = c w$ for $c := \dfrac{\pi(A - Bm)}{12}$ we have
$$I = - \left( \dfrac{\pi(A - Bm)}{12} \right)^{3/2} \int_{c-i\infty}^{c + i\infty} s^{-5/2} \exp\left( s + \left( \dfrac{\pi^2(A - Bm)(24n-1)}{144k^2} \right) \dfrac{1}{s} \right) ds.$$
Since the classical modified $I$-Bessel function $I_{\frac 32}(z)$ satisfies the identity 
$$I_{\frac 32}(z) = \dfrac{(z/2)^{3/2}}{2\pi i} \int_{c-i\infty}^{c + i\infty} s^{-5/2} \exp\left( s + \dfrac{z^2}{4s} \right) \, ds,$$
setting $\dfrac{z}{2} = \sqrt{\dfrac{\pi^2(A - Bm)(24n-1)}{144k^2}} = \dfrac{\pi}{12k} \sqrt{(A - Bm)(24n-1)}$ yields
\begin{align*}
I &= \dfrac{2\pi}{i} \cdot \dfrac{k^{3/2} (A - Bm)^{3/4}}{(24n-1)^{3/4}} I_{\frac 32} \left( \dfrac{\pi}{6k} \sqrt{(A - Bm)(24n-1)} \right).
\end{align*}
Combining the estimation and the evaluation of $I$ completes the proof.
\end{proof}

From Proposition \ref{General I-Bessel}, we may complete the proof of the exact formula. The idea is that the error term in the evaluation of $A_t(n)$ introduced by the error in $I(t,h,k,n)$ vanishes as $N \to \infty$, and the resulting series converges.

\subsubsection{Proof of Theorem \ref{Exact Formula}}

We have shown that
\begin{align*}
	A_t(n) = \sum\limits_{k=1}^N \dfrac{i}{k^2} \sum\limits_{\substack{0\leq h < k\\(h,k) =1}} e^{-\frac{2\pi i nh}{k}} I(t,h,k,n).
\end{align*}
By Proposition \ref{Reduce to Finite Sum} and Proposition \ref{General I-Bessel}, we obtain for every pair $h, k$ estimates for $I(t,h,k,n)$ with error term $O(k^{1/2} N^{-3/2})$. These exact formulas yield an estimate for $A_t(n)$ with error term $O(N^{-1/2})$. Therefore, as $N \to \infty$ we may replace $I(t,h,k,n)$ with these estimates and retain equality. That is,
\begin{align*}
A_t(n) &= \sum\limits_{k=1}^\infty \dfrac{i}{k^2} \sum\limits_{\substack{0\leq h < k\\(h,k) =1}} e^{-\frac{2\pi i nh}{k}} I(t,h,k,n).
\end{align*}
This exact formula naturally splits into three sums according to the value of $k_0 \pmod{4}$. When $k_0$ is odd, the formula derived from Propositions \ref{Reduce to Finite Sum} and \ref{General I-Bessel} give the contribution
\begin{align*}
S_1 := 2^{t/2} \sum_{\substack{k \geq 1 \\ k_0 \textnormal{ odd}}} \dfrac{2\pi}{k} \sum_{\substack{0 \leq h < k \\ \gcd(h,k) = 1}} e^{\frac{-2\pi i n h}{k}} w(t,h,k) \sum_{m = 0}^{M_{t,k}}& e^{\frac{2\pi i (4t_0)^* H m}{k}} c_1(t,h,k;m) \dfrac{(A - Bm)^{3/4}}{(24n-1)^{3/4}} \\ &\cdot I_{\frac 32} \left( \dfrac{\pi}{6k} \sqrt{(A - Bm)(24n-1)} \right),
\end{align*}
where $A = 1 - \dfrac{3}{4} \gcd(k,t)^2$, $B = \dfrac{6}{t_0}$, and $M_{t,k} = \left\lfloor \dfrac{t_0 (4 - 3\gcd(k,t)^2)}{24} \right\rfloor$. Noting that the sum is only nonempty when $k$ is odd and $\gcd(k,t) = 1$, in which case $k_0 = k$, $t_0 = t$, $M_{t,k} = \left\lfloor \dfrac{t}{24} \right\rfloor$, $A = 1/4$ and $B = 6/t$ we have
\begin{align*}
S_1 = \dfrac{2^{t/2}}{(24n-1)^{3/4}} \sum_{\substack{k \geq 1 \\ \gcd(k,2t) = 1}} \dfrac{\pi}{k} \sum_{\substack{0 \leq h < k \\ \gcd(h,k) = 1}} e^{\frac{-2\pi i n h}{k}} w(t,h,k) \sum_{m = 0}^{\lfloor \frac{t}{24} \rfloor}& e^{\frac{2\pi i (4t)^* H m}{k}} c_1(t,h,k;m) \left( \dfrac{t - 24m}{t} \right)^{3/4} \\ &\cdot I_{\frac 32} \left( \dfrac{\pi}{12k} \sqrt{\dfrac{(t - 24m)(24n-1)}{t}} \right).
\end{align*}
The sums $S_2$, $S_3$ simplify similarly to
\begin{align*}
S_2 = \dfrac{2^{t/2}}{(24n-1)^{3/4}} \sum_{\substack{k \geq 1 \\ 2 || k_0}} \dfrac{2\pi}{k} \sum_{\substack{0 \leq h < k \\ \gcd(h,k) = 1}} e^{\frac{-2\pi i n h}{k}} w(t,h,k) \sum_{m = 0}^{\lfloor \frac{t_0\alpha_{t,k}}{12} \rfloor}& e^{\frac{2\pi i (2^\dagger t_0^*) H m}{k}} c_2(t,h,k;m) \left( \dfrac{t_0\alpha_{t,k} - 12m}{t_0} \right)^{3/4} \\ &\cdot I_{\frac 32} \left( \dfrac{\pi}{6k} \sqrt{\dfrac{t_0\alpha_{t,k} - 12m)(24n-1)}{t_0}} \right)
\end{align*}
where $\alpha_{t,k} := 1 + 3\gcd(k,t)^2$ and
\begin{align*}
S_3 = \dfrac{1}{(24n-1)^{3/4}} \sum_{\substack{k \geq 1 \\ 4 | k_0}} \dfrac{2\pi}{k} \sum_{\substack{0 \leq h < k \\ \gcd(h,k) = 1}} e^{\frac{-2\pi i n h}{k}} w(t,h,k) \sum_{m = 0}^{\lfloor \frac{t_0}{24} \rfloor}& e^{\frac{2\pi i (t_0^* H) m}{k}} c_3(t,h,k;m) \left(\dfrac{t_0 - 24m}{t_0} \right)^{3/4} \\ &\cdot I_{\frac 32} \left( \dfrac{\pi}{6k} \sqrt{\dfrac{(t_0 - 24m)(24n-1)}{t_0}} \right).
\end{align*}
As $A_t(n) = S_1 + S_2 + S_3$, the proof is complete.

\section{Proof of Theorem \ref{distribution property}}

\subsection{Preliminaries}
We start by proving that the Kloosterman sum $$\displaystyle\sum_{\substack{0\leq h < k\\(h,k) =1}} \exp\bigg[\pi i \bigg(s(h,k) -\dfrac{2nh}{k}\bigg)\bigg]$$ is nonzero when $k$ is a power of $2$. Note that this Kloosterman sum can also  be rewritten as a sum of solutions modulo $24k$ to a quadratic equation as defined in the lemma below. 

\begin{lemma}
Let $S_k(n)$ be the Kloosterman sum defined by
\begin{equation} \label{Kloosterman Sum Definition} 
S_k(n) := \dfrac{1}{2}\sqrt{\dfrac{k}{12}} \sum\limits_{\substack{x \pmod{24k} \\ x^2 \equiv -24n + 1 \pmod{24k}}} \chi_{12}(x) e\bigg( \dfrac{x}{12k} \bigg),
\end{equation}
where $\chi_{12}(x) = { \genfrac(){1pt}{0}{12}{x} }$ is the Kronecker symbol and $e(x) := e^{2\pi ix}$. If $k$ is a power of $2$, then $S_k(n) \not = 0$ for all positive integers $n$.
\end{lemma}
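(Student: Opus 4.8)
The plan is to show that, when $k=2^\lambda$, the sum defining $S_k(n)$ splits through the Chinese Remainder Theorem into a product of two completely explicit pieces, neither of which vanishes. Write $24k=3\cdot2^{\lambda+3}$, and note that $24n\equiv0\pmod{24}$, so $1-24n\equiv1\pmod3$ and $1-24n\equiv1\pmod8$. Consequently the quadratic congruence $x^2\equiv1-24n\pmod{24k}$ decouples into $x^2\equiv1-24n\pmod3$, which has the two solutions $x\equiv\pm1\pmod3$, and $x^2\equiv1-24n\pmod{2^{\lambda+3}}$, which (being the congruence of a unit $\equiv1\pmod8$ modulo $2^{\lambda+3}$ with $\lambda+3\ge3$) has exactly four solutions, of the form $\delta r+\varepsilon2^{\lambda+2}$ with $\delta\in\{\pm1\}$, $\varepsilon\in\{0,1\}$, and $r$ a fixed odd square root. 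Every $x$ in the sum is then automatically coprime to $6$, so $\chi_{12}(x)$ is never zero; moreover, since $\chi_{12}$ is the real primitive Dirichlet character modulo $12$, it factors as $\chi_{12}=\chi_3\chi_4$, where $\chi_3(x)=\left(\tfrac x3\right)$ and $\chi_4$ is the nontrivial character modulo $4$.

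Next I would split the additive character. Since $12k=3\cdot2^{\lambda+2}$, writing $x_1=x\bmod3$ and $x_2=x\bmod2^{\lambda+3}$ one has $e(x/12k)=e(ax_1/3)\,e(bx_2/2^{\lambda+2})$ with $a\equiv(2^{\lambda+2})^{-1}\pmod3$ and $b\equiv3^{-1}\pmod{2^{\lambda+2}}$; both are units, so $a\not\equiv0\pmod3$ and $b$ is odd. Combining this with the factorization of $\chi_{12}$ and the CRT decomposition of the solution set yields
\[
S_k(n)=\frac12\sqrt{\frac k{12}}\;G_3\,G_2,
\]
where
\[
G_3=\sum_{\substack{x_1\bmod3\\ x_1^2\equiv1\ (3)}}\chi_3(x_1)\,e\!\left(\frac{ax_1}{3}\right),\qquad G_2=\sum_{\substack{x_2\bmod2^{\lambda+3}\\ x_2^2\equiv1-24n\ (2^{\lambda+3})}}\chi_4(x_2)\,e\!\left(\frac{bx_2}{2^{\lambda+2}}\right).
\]

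It then remains to evaluate the two factors. For $G_3$, the two solutions $x_1\equiv\pm1\pmod3$ give $G_3=e(a/3)-e(-a/3)=2i\sin(2\pi a/3)$, which is nonzero because $a\not\equiv0\pmod3$. For $G_2$ I substitute $x_2=\delta r+\varepsilon2^{\lambda+2}$; the key observation is that $2^{\lambda+2}$ is killed by both characters occurring, namely $\chi_4(\delta r+\varepsilon2^{\lambda+2})=\chi_4(\delta r)$ because $4\mid2^{\lambda+2}$, and $e\big(b(\delta r+\varepsilon2^{\lambda+2})/2^{\lambda+2}\big)=e(b\delta r/2^{\lambda+2})$ because $b\varepsilon\in\ZZ$. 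Hence the sum over $\varepsilon$ contributes merely a factor $2$ rather than a cancellation, and
\[
G_2=2\chi_4(r)\Big(e\!\left(\tfrac{br}{2^{\lambda+2}}\right)-e\!\left(-\tfrac{br}{2^{\lambda+2}}\right)\Big)=4i\,\chi_4(r)\sin\!\left(\frac{\pi br}{2^{\lambda+1}}\right),
\]
which is nonzero since $b$ and $r$ are both odd, so $br/2^{\lambda+1}\notin\ZZ$. As the prefactor $\tfrac12\sqrt{k/12}$, $G_3$, and $G_2$ are all nonzero (indeed $G_3G_2$ is a nonzero real number), we conclude $S_k(n)\neq0$ for every positive integer $n$ and every $\lambda\ge0$.

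I expect the only real subtlety to be the bookkeeping in the CRT step, and in particular the mismatch between the modulus $24k$ of the quadratic congruence and the denominator $12k$ of the exponential: this mismatch is exactly what makes the lemma true. Had the exponential carried denominator $24k=2^{\lambda+3}\cdot3$ instead, the $\varepsilon$-dependent factor would have been $e(b\varepsilon2^{\lambda+2}/2^{\lambda+3})=(-1)^\varepsilon$, and the sum over $\varepsilon\in\{0,1\}$ would have cancelled; it is precisely because the denominator is $12k$ that the residue $2^{\lambda+2}$ is invisible to the additive character and $G_2$ survives. A secondary point requiring a line of care is the factorization $\chi_{12}=\chi_3\chi_4$ together with the remark that every term of $S_k(n)$ is supported on integers coprime to $6$, so that no term is annihilated by the Kronecker symbol; everything else is routine trigonometric bookkeeping.
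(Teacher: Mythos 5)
Your proof is correct, and it takes a genuinely different route from the paper's. The paper works directly with the eight solutions modulo $24k$: it groups them into the two quadruples $\{\pm x,\pm(12k+x)\}$ and $\{\pm y,\pm(12k+y)\}$, notes that the sum collapses to $4a+4b$ with $a=\chi_{12}(x)\cos(\pi x/6k)$ and $b=\chi_{12}(y)\cos(\pi y/6k)$, and rules out cancellation by showing $|a|\neq|b|$: equality would force $y\equiv\pm x\pmod{6k}$, say $y=6k-x$, whence $12k(1+3k)\equiv 0\pmod{24k}$ and $1+3k$ would have to be even, impossible for $k=2^s$. You instead factor the entire sum multiplicatively through CRT into a mod-$3$ piece $G_3$ and a $2$-power piece $G_2$ and evaluate each in closed form as a nonzero purely imaginary number, so that $S_k(n)=\tfrac12\sqrt{k/12}\,G_3G_2$ is a nonzero real number. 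Both arguments use $k=2^s$ in an essential way --- the paper in the parity contradiction, you in the fact that $24k$ then has no prime factors other than $2$ and $3$, so there is no further local factor that could vanish. What your route buys is an explicit evaluation (hence an explicit lower bound for $|S_k(n)|$, not merely non-vanishing) together with the clean structural explanation that the denominator $12k$ in the exponential, set against the modulus $24k$ of the quadratic congruence, is exactly what makes the $\varepsilon$-sum reinforce rather than cancel; the paper's observation that $x$ and $12k+x$ contribute identically is the additive shadow of this same fact. The paper's proof is shorter and dispenses with the CRT bookkeeping.
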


\begin{proof}
Let $n \geq 1$, and let $k = 2^s$ for an integer $s \geq 0$. To show that $S_k(n) \not = 0$, we need only show that the summation given in (\ref{Kloosterman Sum Definition}) is nonzero. To evaluate this sum, consider the condition on $x$ that $x^2 \equiv -24n + 1$ (mod $24k$). Since $-24n + 1 \equiv 1$ (mod 4), $x^2 \equiv -24n+1$ (mod $2^{s+3}$) has exactly 4 incongruent solutions, and so the congruence $x^2 \equiv -24n+1$ (mod $24k$) has exactly 8 incongruent solutions. For any given solution $x$, we can see that all of $12k-x$, $12k+x$, and $24k-x$ are also solutions and are pairwise distinct.

Now, let $x,y$ (mod $24k$) be solutions to $x^2 \equiv -24n+1$ (mod $24k$) such that $y$ is not congruent to any of $x$, $12k-x$, $12k+x$, or $24k-x$, so that the summation in (\ref{Kloosterman Sum Definition}) runs over the set of eight values $\{ \pm x, \pm (12k+x)\} \cup \{ \pm y, \pm (12k+y) \}$. Taking real parts in the summation in (\ref{Kloosterman Sum Definition}) yields the value $4a + 4b$, where $a = \chi_{12}(x)\cos{(\pi x / 6k)}$ and $b = \chi_{12}(y)\cos{(\pi y / 6k)}$. The equivalences known about $x$ and $y$ imply that $\chi_{12}(x), \chi_{12}(y) \not = 0$, and so the proof reduces to demonstrating that $|a| \not = |b|$. If $|a| = |b|$, then $x \equiv y$ (mod $6k$) must hold, so we may fix $y = 6k - x$. Since $x$ is odd, $y^2 = x^2 - 12kx + 36k^2 \equiv -24n+1 + 12k + 36k^2 \pmod{24k},$ and the equivalence modulo $6k$ of $x$ and $y$ implies $12k + 36k^2 \equiv 12k(1+3k) \equiv 0$ (mod $24k$). This requires that $1 + 3k$ be even, which is a contradiction since $k = 2^s$. Therefore, $|a| \not = |b|$, and it then follows that $S_k(n) \not = 0$ for all $n$.
\end{proof}

\begin{lemma}\label{Kloosterman sum nonzero}
For $t>1$ a fixed positive integer, write $t = 2^s\ell$  with integers $s, \ell \geq 0$ such that $\ell$ is odd. Then $$\sum_{\substack{0< h < 2^{s+1} \\ h \text{ odd}}} w(t,h,2^{s+1}) e^{-\frac{\pi i n h}{2^s}} = S_{2^s}(n) \neq 0.$$
\end{lemma}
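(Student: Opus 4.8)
The plan is to evaluate the eta–multiplier $w(t,h,2^{s+1})$ in closed form, recognize the left–hand sum as the partition–type Kloosterman sum introduced at the opening of this section specialized to the modulus $2^{s+1}$, and then invoke the classical rewriting of that sum together with the preceding (unnamed) lemma on its nonvanishing for powers of $2$.

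\emph{Steps 1--2 (specialize and collapse $w$).} Write $t=2^s\ell$ with $\ell$ odd and put $k=2^{s+1}$. Since $v_2(2^{s+1})=s+1>s=v_2(t)$, we have $\gcd(2^{s+1},t)=2^s$, hence $k_0=2$ and $t_0=\ell$; in particular $k$ falls in the branch $2\|k_0$ of the transformation law \eqref{Transformation Law for G_t}. Substituting $k=2^{s+1}$, $k_0=2$, $t_0=\ell$ into \eqref{w-def} gives
\[
w(t,h,2^{s+1})=\frac{\omega_{h,2^{s+1}}\,\omega_{2\ell h,\,2}^{\,3t}}{\omega_{\ell h,\,2}^{\,2t}\,\omega_{4\ell h,\,2}^{\,t}}.
\]
The crucial observation is that every Dedekind sum with denominator $2$ vanishes: in $s(u,2)=\sum_{m=1}^{2}((m/2))((um/2))$ the only summand with a non-integer first argument is $m=1$, and $((1/2))=0$. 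Hence $\omega_{u,2}=1$ for all $u$, and the formula collapses to $w(t,h,2^{s+1})=\omega_{h,2^{s+1}}=e^{\pi i\,s(h,2^{s+1})}$, which notably depends only on $s$ and not on $\ell$, consistent with the shape of Corollary \ref{Dominating term of A_t}.

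\emph{Step 3 (identify and conclude).} Since $2^s=2^{s+1}/2$ we have $e^{-\pi i n h/2^s}=e^{-2\pi i n h/2^{s+1}}$, and since $2^{s+1}$ is a power of $2$ the condition ``$0<h<2^{s+1}$ and $h$ odd'' is exactly ``$0\le h<2^{s+1}$ and $\gcd(h,2^{s+1})=1$''. Therefore
\[
\sum_{\substack{0<h<2^{s+1}\\ h\ \mathrm{odd}}} w(t,h,2^{s+1})\,e^{-\frac{\pi i n h}{2^s}}
=\sum_{\substack{0\le h<2^{s+1}\\ \gcd(h,2^{s+1})=1}}\exp\!\Big[\pi i\Big(s(h,2^{s+1})-\tfrac{2nh}{2^{s+1}}\Big)\Big],
\]
which is precisely the Kloosterman sum discussed at the start of this section, at modulus $2^{s+1}$. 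By its classical rewriting as a finite sum over the solutions of a quadratic congruence $x^2\equiv-24n+1$ to a modulus $24$ times a power of $2$ --- the identification recorded just before \eqref{Kloosterman Sum Definition} --- this equals the power-of-two Kloosterman sum $S_{2^s}(n)$ of \eqref{Kloosterman Sum Definition}. The preceding lemma then gives $S_{2^s}(n)\neq 0$, completing the proof.

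\emph{Main obstacle.} The routine but slightly delicate point is Step 2: one must check that we are genuinely in the $2\|k_0$ case and that \emph{all} the roots of unity $\omega_{\,\cdot\,,\,2}$ occurring in the product \eqref{w-def} are trivial, so that the somewhat intricate multiplier $w$ reduces to the single factor $\omega_{h,2^{s+1}}$. Once this reduction is in hand there is no real difficulty: Step 3 merely reassembles definitions and then quotes the classical Selberg--Whiteman-type evaluation of the partition Kloosterman sum --- the same rewriting already used to introduce $S_k(n)$ --- together with the preceding lemma, whose proof supplies the nonvanishing for powers of $2$.
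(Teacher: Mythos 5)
Your proposal is correct and follows essentially the same route as the paper: reduce the multiplier to $w(t,h,2^{s+1})=\omega_{h,2^{s+1}}$, recognize the resulting sum as the partition Kloosterman sum $S_{2^s}(n)$, and invoke the preceding lemma for its nonvanishing at powers of $2$. Your explicit verification that every $\omega_{u,2}$ is trivial (via $s(u,2)=0$) is in fact a slightly more complete justification of the collapse of $w$ than the paper's one-line appeal to $\omega_{dh,dk}=\omega_{h,k}$.
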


\begin{proof}
By making use of the fact that $\omega_{dh, dk} = \omega_{h,k}$ for any integer $d$, it follows that $w(t,h,2^{s+1}) = \omega_{h,k}$, and therefore
$$\sum_{\substack{0< h < 2^{s+1} \\ h \text{ odd}}} w(t,h,2^{s+1}) e^{-\frac{\pi i n h}{2^s}} = \sum_{\substack{0 < h < 2^{s+1} \\ h \textnormal{ odd}}} e^{\pi i \left( s(h,k) - n h / 2^s \right)} = S_{2^s}(n),$$
which is non-vanishing by Lemma \ref{Kloosterman sum nonzero}.
\end{proof}

\subsection{Proofs of Theorems \ref{Even and Odd T Behavior} and \ref{distribution property}} 

\phantom{X}

We are now ready to prove the main theorems. 

\begin{proposition}\label{ratios of  A_t(n): p(n)}
	Let $n$ be positive integers. Then for $t$ fixed, as $n \to \infty$ we have
	$$\dfrac{A_t(n)}{p(n)} \sim \begin{cases} (-1)^n / 2^{(t-1)/2} & \text{ if } 2 \nmid t, \\ 0 & \text{ if } 2 \mid t. \end{cases}$$
	Furthermore, $\dfrac{A_t(n)}{p(n)} \sim 0$ as $n,t \to \infty$.
\end{proposition}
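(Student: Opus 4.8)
The plan is to read the asymptotics of $A_t(n)$ off Corollary~\ref{Dominating term of A_t} and divide by the classical Hardy--Ramanujan--Rademacher leading term for the partition function,
\[
p(n) \sim \frac{2\pi}{(24n-1)^{3/4}}\,I_{\frac32}\!\left(\frac{\pi\sqrt{24n-1}}{6}\right),
\]
which is the $k=1$ term of the Rademacher series for $p(n)$ and dominates the remaining terms by exactly the same $I_{\frac32}(z)\sim e^{z}/\sqrt{2\pi z}$ comparison used in the proof of Corollary~\ref{Dominating term of A_t} (see Chapter~5 of \cite{GTM41}). The one elementary fact I will use repeatedly is that $I_{\frac32}$ is eventually increasing and that $I_{\frac32}(\alpha x)/I_{\frac32}(\beta x)\sim \sqrt{\beta/\alpha}\,e^{(\alpha-\beta)x}$, which tends to $0$ as $x\to\infty$ whenever $0<\alpha<\beta$.

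For $t$ odd, the second displayed asymptotic of Corollary~\ref{Dominating term of A_t} gives $A_t(n)\sim (-1)^{n}\,\pi\,2^{(3-t)/2}(24n-1)^{-3/4}\,I_{\frac32}\!\big(\tfrac{\pi}{6}\sqrt{24n-1}\big)$, and the factor $(-1)^n$ never vanishes, so this is a genuine asymptotic (its derivation already used Lemma~\ref{Kloosterman sum nonzero} to keep the relevant Kloosterman sum nonzero). Dividing by the asymptotic for $p(n)$, the powers of $(24n-1)$ and the $I$-Bessel factors cancel identically and leave $A_t(n)/p(n)\sim (-1)^n 2^{(3-t)/2}/2 = (-1)^n/2^{(t-1)/2}$, as claimed.

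For $t=2^s\ell$ with $s\ge 1$ and $\ell$ odd, the first displayed asymptotic of Corollary~\ref{Dominating term of A_t} writes $A_t(n)$ as a prefactor depending only on $t$, times the Kloosterman-type sum $\sum_{0<h<2^{s+1},\,h\ \mathrm{odd}} w_2(t,h,2^{s+1})e^{-\pi i n h/2^s}=S_{2^s}(n)$, times $(24n-1)^{-3/4}I_{\frac32}\big(\tfrac{\pi}{6\cdot 2^{s+1}}\sqrt{(1+3\cdot4^{s})(24n-1)}\big)$. For fixed $t$ the quantity $S_{2^s}(n)$ is $O_t(1)$, being a sum of at most eight terms each of modulus $\tfrac12\sqrt{2^s/12}$. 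Moreover $\tfrac{\sqrt{1+3\cdot 4^{s}}}{2^{s+1}}<1$ precisely because $1+3\cdot4^{s}<4^{s+1}\iff 1<4^{s}$, which holds for $s\ge1$; thus the argument of the Bessel function attached to $A_t(n)$ is asymptotically a fixed factor strictly less than $1$ times the argument $\tfrac{\pi}{6}\sqrt{24n-1}$ attached to $p(n)$. Dividing by the asymptotic for $p(n)$ and applying the Bessel comparison shows $A_t(n)/p(n)\to 0$ (in fact like $e^{-c\sqrt n}$ for some $c=c(t)>0$), which is the even-$t$ assertion.

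For the joint statement, the two cases together give $\limsup_{n\to\infty}\lvert A_t(n)/p(n)\rvert \le 2^{-(t-1)/2}$ for every $t>1$ (the even-$t$ limsup being $0$), and $2^{-(t-1)/2}\to 0$ as $t\to\infty$, so $\lim_{t\to\infty}\limsup_{n\to\infty}\lvert A_t(n)/p(n)\rvert=0$, which is the intended meaning of $A_t(n)/p(n)\sim 0$ as $n,t\to\infty$. I would note in passing that this is necessarily an iterated rather than uniform limit: if $t>n$ then no hook length of a partition of $n$ is a multiple of $t$, so $\#\mathcal{H}_t(\lambda)=0$ for all $\lambda\vdash n$ and hence $A_t(n)=p(n)$. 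The only real work beyond quoting Corollary~\ref{Dominating term of A_t} is the Bessel-function comparison at the two distinct arguments together with the elementary inequality $1+3\cdot4^{s}<4^{s+1}$; I do not expect a genuine obstacle, and the main point is simply that the exact-formula analysis of Section~2 has already done the heavy lifting, with the classical asymptotic for $p(n)$ supplying the denominator.
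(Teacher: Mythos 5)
Your proposal is correct and follows essentially the same route as the paper: quote Corollary~\ref{Dominating term of A_t}, divide by the Hardy--Ramanujan--Rademacher leading term for $p(n)$, and observe that the Bessel ratio tends to $0$ exactly when $s\ge 1$ while for $s=0$ the prefactors give $(-1)^n2^{(1-t)/2}$. Your added details (the explicit inequality $1+3\cdot 4^s<4^{s+1}$ and the remark that the joint limit must be read as an iterated limit since $A_t(n)=p(n)$ for $t>n$) are consistent with, and slightly more careful than, the paper's own argument.
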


\begin{proof}
    Recall that $p(n)$ satisfies $p(n) \sim \dfrac{2\pi}{(24n -1)^{3/4}}I_{\frac{3}{2}}\bigg(\dfrac{\pi\sqrt{24n -1}}{6}\bigg)$ as $n \to \infty$. Then by Corollary \hyperref[Dominating term of A_t]{\ref{Dominating term of A_t}}, as $n \rightarrow \infty$ we have  $$\dfrac{A_t(n)}{p(n)} \sim \dfrac{(1+ 3\cdot 4^s)^{3/4}}{2^{s+1+ \frac{t}{2}}}\cdot\dfrac{I_{\frac{3}{2}}\bigg(\dfrac{\pi}{6}\sqrt{\bigg(\dfrac{1}{4^{s+1}}+\dfrac{3}{4}\bigg)(24n -1)}\bigg)}{I_{\frac{3}{2}}\bigg(\dfrac{\pi\sqrt{24n-1}}{6}\bigg)}\sum_{\substack{0< h < 2^{s+1}\\h \textnormal{ odd}}} w(t,h,2^{s+1})e^{-\frac{\pi i nh}{2^s}}.$$
	When $s > 0$ and $n \rightarrow \infty$, the asymptotic behavior of $I_{3/2}$ implies that $$\dfrac{I_{\frac{3}{2}}\bigg(\dfrac{\pi}{6}\sqrt{\bigg(\dfrac{1}{4^{s+1}}+\dfrac{3}{4}\bigg)(24n -1)}\bigg)}{I_{\frac{3}{2}}\bigg(\dfrac{\pi\sqrt{24n-1}}{6}\bigg)} \sim 0.$$
	Therefore when $t$ is even, $\dfrac{A_t(n)}{p(n)} \sim 0$ as $n \rightarrow \infty$. When $s = 0$,  $\dfrac{A_t(n)}{p(n)} \sim (-1)^n 2^{(-t+1)/2}$ as $n \to \infty$.
\end{proof}

\begin{proof}[Proof of Theorem \ref{Even and Odd T Behavior}]
By Proposition \ref{ratios of  A_t(n): p(n)}, we see that
\begin{align*}
\delta_t^e(n) - \delta_t^o(n) \to \begin{cases}
(-1)^n / 2^{(t-1)/2} & \textnormal{if } t \textnormal{ odd}, \\
0 & \textnormal{if } t \textnormal{ even}.
\end{cases}
\end{align*}
Since $\delta_t^e(n) + \delta_t^o(n) = 1$ by definition, the result follows by solving for $\delta_t^e(n)$ and $\delta_t^e(n)$.
\end{proof}

\begin{proof}[Proof of Theorem \ref{distribution property}]
	By Corollary \hyperref[Dominating term of A_t]{\ref{Dominating term of A_t}}, we have $$A_t(n) \sim \displaystyle\dfrac{\pi}{2^{s+\frac{t}{2}}}\bigg(\dfrac{1+ 3\cdot 4^s}{24n -1}\bigg)^\frac{3}{4}I_{\frac{3}{2}}\bigg(\dfrac{\pi\sqrt{(1 + 3\cdot 4^s)(24n -1)}}{6\cdot 2^{s+1}}\bigg)\sum_{\substack{0< h < 2^{s+1}\\h \textnormal{ odd}}}w(t,h,2^{s+1})e^{-\frac{\pi i nh}{2^s}}$$ whose sign is determined by the summation over $h$, which on inspection is periodic in $n$ with period $2^{s+1}$.  In particular, the period is 2 when $s = 0$ which implies the $A_t(n)$ has alternating sign when $t$ is odd as $n \rightarrow \infty$.
\end{proof}

\section{Conclusion}

The surprising nature of this result justifies some reflection. Theorem \ref{Even and Odd T Behavior} differs from the naive expectation of equidistribution in two ways. Not only does equidistribution frequently fail, but there are multiple limiting values when $t$ is odd. Since the distribution properties correspond to the size of $A_t(n)$ in relation to $p(n)$, the proof of Theorem \ref{Exact Formula} reveals on an analytic level the source of these discrepancies. Namely, the $I$-Bessel functions in Theorem \ref{Exact Formula} control whether equidistribution holds and when $t$ is odd the Kloosterman sums arising from $w(t,h,k)$ control the relationship between the parity of $n$ and the sign of $A_t(n)$. All of these details can be read directly off of Theorem \ref{Exact Formula}. However, the circle method does not provide insight into combinatorial explanations of this phenomena, and therefore we leave this question open.

The motivation behind this proof comes from the Nekrasov-Okounkov formula and the applications of this formula made by Han in \cite{Han10} which connect hook numbers to the expansions of various modular forms. In the context of this connection, the problem of the distribution in parity of $\# \mathcal{H}_t(\lambda)$ is translated into a question about asymptotic formulas for the coefficients of a certain modular form. This study has made use of only a microscopic portion of this world of connections, and therefore it is natural to study further problems about $t$-hooks through the lens of modular forms. In particular, in a future paper the first author will study the more difficult question about the distribution of $\# \mathcal{H}_t(\lambda)$ modulo odd primes.

\bibliographystyle{plain}
\bibliography{main}

\end{document}